\newtheorem{theorem}{Theorem}[section]
\newtheorem{claim}[theorem]{Claim}
\newtheorem{corollary}[theorem]{Corollary}
\newtheorem{lemma}[theorem]{Lemma}
\newtheorem{question}[theorem]{Question}
\theoremstyle{definition}
\numberwithin{equation}{section}
\newcommand{\<}{\langle}
\renewcommand{\>}{\rangle}
\newcommand{\uh}{\upharpoonright}
\title{Relative Definability of $n$-Generics}
\author{Wei Wang}
\thanks{This research was partially supported by NSF Grant 11471342 of China. The author thanks Andr\'{e} Nies for his comments on a previous version of this paper.}
\address{Institute of Logic and Cognition and Department of Philosophy, Sun Yat-sen University, 135 Xingang Xi Road, Guangzhou 510275, P.R. China}
\email{wwang.cn@gmail.com}
\subjclass[2000]{03D28, 03D25}
\keywords{Turing degrees, generic}
\begin{document}

\begin{abstract}
A set $G \subseteq \omega$ is $n$-generic for a positive integer $n$ if and only if every $\Sigma^0_n$ formula of $G$ is decided by a finite initial segment of $G$ in the sense of Cohen forcing. It is shown here that every $n$-generic set $G$ is properly $\Sigma^0_n$ in some $G$-recursive $X$. As a corollary, we also prove that for every $n > 1$ and every $n$-generic set $G$ there exists a $G$-recursive $X$ which is generalized ${\rm low}_n$ but not generalized ${\rm low}_{n-1}$. Thus we confirm two conjectures of Jockusch \cite{Jockusch:1980.generic}.
\end{abstract}

\maketitle

\section{Introduction}

In recursion theory, Cohen generics have been intensively studied and are usually just called \emph{generics}. In \cite{Jockusch:1980.generic} Jockusch presented the first systematic study of the Turing degrees of generics and introduced the notion of \emph{$n$-genericity}. A set $G$ is $n$-generic for a positive $n$ if and only if every $\Sigma^0_n$ or $\Pi^0_n$ fact of $G$ is forced by a finite initial segment of $G$. For recursion theorists, it is natural to investigate which properties of generics hold for $n$-generics and which techniques applied to generics could also be applied to $n$-generics. On the other hand, $n$-generics are more handy for most recursion theoretic techniques. So $n$-generics, in particular $1$-generics, are even more intensively studied and play important roles in recursion theory.

An interesting recursion theoretic property of $1$-generics is their relative recursive enumerability. Recall that $X$ is \emph{recursively enumerable in and above} (r.e.a.\ for short) $Y$ if $X$ is $Y$-r.e.\ and $Y <_T X$. In terms of definability, if $X$ is r.e.a.\ $Y$ then $X$ is properly $\Sigma^0_1$ in $Y$. An \emph{r.e.a.}\ set is one which is r.e.a.\ some other set, and an \emph{r.e.a.\ degree} is a Turing degree containing an r.e.a.\ set. In the theory of Turing degrees, the r.e.a.\ degrees have been proven important. Jockusch \cite{Jockusch:1980.generic} proved that every $1$-generic set $G$ is r.e.a.\ and thus the $1$-generic degrees form a subclass of the r.e.a.\ degrees. In the same paper, it was also shown that every $2$-generic $G$ is properly $\Sigma^0_2$ in some $G$-recursive $X$. The first result connects $1$-generic degrees with recursively enumerable degrees. For example, combining the relative recursive enumerability of $1$-generics and a theorem of r.e.\ degrees (Yates \cite{Yates:70*1}) Jockusch concluded that the Turing degrees below a $1$-generic degree are not dense (\cite[Corollary 5.3]{Jockusch:1980.generic}). Moreover, as the r.e.a.\ degrees form a large and well-known class (e.g., see \cite{Cai.Shore:2012}), the first result seems more popular. However, the proofs of the two results are similar. Both proofs were done by cleverly defining some functionals which produce the desired $X$ for sufficient generic $G$. But these clever functionals do not seem to admit easy generalizations to large $n$. So Jockusch posted a general conjecture that every $n$-generic $G$ is properly $\Sigma^0_n$ in some $G$-recursive $X$ for every positive integer $n$ (\cite[Conjecture 5.13]{Jockusch:1980.generic}).

The main purpose of this article is to confirm the above conjecture of Jockusch. We provide a proof in \S \ref{s:Main-theorem}. Then we present some consequences of our main theorem in \S \ref{s:Remarks}, including a confirmation of \cite[Conjecture 5.14]{Jockusch:1980.generic}. We also post some open questions in \S \ref{s:Remarks}.

Our terminology is rather standard. A set $X \subseteq \omega$ is usually identified with its characteristic function in $2^\omega$. We write $\sigma \subseteq \tau$ if $\sigma$ is an initial segment of $\tau \in 2^{<\omega}$ and $\sigma \subset \tau$ if $\sigma \subseteq \tau$ and $\sigma \neq \tau$. The concatenation of two finite sequences $s$ and $t$ is simply denoted by $s t$. If $x_0, \ldots, x_{n-1} \in \omega$ then $\<x_0 \ldots x_{n-1}\>$ denote the finite sequence of length $n$ with each $x_i$ being its $i$-th element. A set $D \subseteq 2^{<\omega}$ is \emph{dense} if every $\sigma \in 2^{<\omega}$ is extended by some $\tau \in D$. For a fixed $\rho \in 2^{<\omega}$, a set $D \subseteq 2^{<\omega}$ is \emph{dense below $\rho$} if every extension of $\rho$ is extended by some element of $D$. For a fixed $G \subseteq \omega$, a set $D \subseteq 2^{<\omega}$ is \emph{dense along $G$} if every finite initial segment of $G$ is extended by some element of $D$. Throughout the paper, forcing always means Cohen forcing.

Besides \cite{Jockusch:1980.generic}, we also recommend Kumabe \cite{Kumabe:1996} for background of $n$-generic degrees.

\section{The Main Theorem}\label{s:Main-theorem}

In this section we prove the following theorem.

\begin{theorem}\label{thm:Main}
For each $n > 0$, there exists a partial recursive functional $\Gamma$ such that if $G$ is $n$-generic then $\Gamma(G)$ is total and $G$ is properly $\Sigma^0_n$ in $\Gamma(G)$.
\end{theorem}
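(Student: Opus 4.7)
The plan is to construct one partial recursive functional $\Gamma$ and to verify for every $n$-generic $G$ the three properties: (a) $\Gamma(G)$ is total, (b) $G$ is $\Sigma^0_n$ in $\Gamma(G)$, and (c) $G$ is not $\Pi^0_n$ in $\Gamma(G)$. Property (c) is the most routine: for each $\Pi^0_n$ index $e$, the set of $\sigma \in 2^{<\omega}$ that force, via $\Sigma^0_n$ Cohen forcing, some $k$ with $\sigma(k) \neq \psi_e(k, \Gamma(\sigma))$ is itself a $\Sigma^0_n$ dense subset of $2^{<\omega}$, and is met by any $n$-generic $G$; any such $\sigma \subset G$ witnesses $G \neq \{k : \psi_e(k, \Gamma(G))\}$.

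The main work lies in designing $\Gamma$. My plan is to build $\Gamma(\sigma)$ stagewise as $\sigma$ grows, indexing the bits of $\Gamma(\sigma)$ by tuples $\langle k, y_1, \ldots, y_{n-1} \rangle$; the value at such a tuple records a recursive guess at $G(k)$ drawn from a finite segment of $\sigma$ whose length depends on the auxiliary indices. The goal is to arrange, for all $k$,
\[
k \in G \iff \exists y_1 \,\forall y_2 \, \cdots\, Q y_n\; R\bigl(k, y_1, \ldots, y_n, \Gamma(G)\bigr),
\]
where $R$ is recursive, and so that wrong choices of $(y_1, \ldots, y_{n-1})$ introduce a recursively detectable inconsistency at the corresponding cells, making the appropriate quantifier fail. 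Totality of $\Gamma(G)$ then reduces to density along $G$ of the stages at which the construction assigns values, which is a $\Sigma^0_1$ density and is met by any $1$-generic (hence by any $n$-generic) $G$; and (b) can then be read off directly from the encoding.

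The main obstacle I expect is designing the coding uniformly in $n$. Jockusch's specific functionals for $n = 1, 2$ were clever but ad hoc; a uniform $n$-level scheme must balance richness (enough structure to detect every wrong guess and thereby cause the required quantifier alternation) against leanness (sparse enough that $\Sigma^0_n$ densities along $G$ suffice to produce correct witnesses at every quantifier level). I anticipate that a workable $\Gamma$ will be organized along a recursive tree of guesses of depth $n$, with the bits of $\Gamma(G)$ on each node encoding partial agreement between the current guess and $G$, and that the verification of (b) will proceed by induction on quantifier level, invoking the $n$-genericity of $G$ once per level to produce the correct witness while the built-in inconsistencies ensure the induction terminates at the true value of $G(k)$.
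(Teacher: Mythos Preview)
Your proposal has a genuine gap: you have the relative difficulty of (b) and (c) inverted. Property (c) is \emph{not} routine; it is the heart of the argument, and the density you assert does not hold for an arbitrary $\Gamma$.

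Concretely, you claim that for each $\Pi^0_n$ index $e$ the set of $\sigma$ forcing a disagreement between $\sigma$ and $\{k:\psi_e(k,\Gamma(G))\}$ is a dense $\Sigma^0_n$ set. But this depends entirely on $\Gamma$. If $\Gamma$ were the identity, $G$ would be $\Delta^0_1$ in $\Gamma(G)$ and no such density could hold for the relevant $e$. More to the point, any $\Gamma$ you build for (b) will, by design, make $G$ definable from $\Gamma(G)$ at level $\Sigma^0_n$; you must then \emph{engineer} $\Gamma$ so that no $\Pi^0_n$ definition succeeds, and this is exactly where the work lies. The paper does this via a ``deputy'' mechanism: for each $\rho$, it arranges that densely below $\rho\langle 0\rangle$ there are $\sigma$ with associated $\tau\supset\rho\langle 1\rangle$ such that any $\Pi^0_{n-1}$ fact of $\Gamma(G)$ forced by $\sigma$ is also forced by $\tau$ (Lemma~\ref{lem:good-deputy}). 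Only with this transfer property in hand does the density argument for (c) go through: if $\omega-G=\{x:\theta(\Gamma(G),x)\}$ with $\theta\in\Sigma^0_n$, then each $x\notin G$ yields $\sigma_x\subset G$ forcing $\theta(\Gamma(G),x)$, the deputy gives $\tau_x$ with $\tau_x(x)=1$ also forcing it, and \emph{that} is what makes the needed $\Sigma^0_n$ set dense along $G$.

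There is a second issue your outline does not anticipate: (b) and (c) are in tension. A na\"{\i}ve coding of $G$ into $\Gamma(G)$ via $n$-fold limits (your ``tree of guesses of depth $n$'') tends to make $G$ actually $\Delta^0_n$ in $\Gamma(G)$, defeating (c). The paper confronts this explicitly (see \S\ref{ss:Q}--\S\ref{ss:conflicts}): the coding modules ($Q$-modules) by themselves would make the top-level limit $\Delta_{n-1}(\Gamma(G);x)$ total and equal to $G(x)$, which contradicts (A2). The resolution is that the deputy-building modules ($D$-modules) interact with the coding via a careful ``replication of constraints'' so that along a generic $G$ the coding oscillates infinitely often exactly where $G(x)=0$, keeping the top limit undefined there while still securing the $\Sigma^0_n$ definition. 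Your sketch has no mechanism for managing this interaction, and without one the plan cannot be completed.
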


To prove Theorem \ref{thm:Main}, we recursively construct two functionals $\Gamma$ and $\Delta$ such that the followings hold for every $n$-generic $G$:
\begin{enumerate}
    \item[(A1)] $X = \Gamma(G)$ is total.
    \item[(A2)] $\omega - G$ is not $\Sigma^X_n$.
    \item[(A3)] $\Delta_k(X; x_0,\ldots,x_{n-k})$ is total for each $k < n-1$, where
    \begin{gather*}
	\Delta_0(X; x_0,\ldots,x_{n-1}) = \Delta(X; x_0,\ldots,x_{n-1}),\\
	\Delta_{k+1}(X; x_0,\ldots,x_{n-k-2}) = \lim_{x_{n-k-1}} \Delta_{k}(X; x_0,\ldots,x_{n-k-2},x_{n-k-1}).
    \end{gather*}
    \item[(A4)] $x \in G \leftrightarrow \Delta_{n-1} (X; x) = 1$ for every $x$.
\end{enumerate}
By (A3-4), $G$ is $\Sigma^X_n$.

In the construction, we refine (A1-4) to countably many requirements and recursively assign to each requirement countably many stages so that every stage is assigned to exactly one requirement. A stage assigned to a requirement $R$ is called an \emph{$R$-stage}. At each stage we construct finite approximations of $\Gamma$ and $\Delta$. If stage $s$ is an $R$-stage then \emph{the $R$-module} performs a finite set of actions for the sake of meeting requirement $R$.

Firstly, we sketch the basic strategies for (A1-4). For convenience, we write $\vec{x}$ for a tuple $(x_0,\ldots,x_{k-1})$ where $k$ is the length of $\vec{x}$.

\subsection{Meeting (A1)}

We view $\Gamma$ as a partial recursive function $\Gamma: 2^{<\omega} \to 2^{<\omega}$ which is continuous, i.e., $\Gamma(\sigma) \subseteq \Gamma(\tau)$ whenever $\sigma \subseteq \tau$. To meet (A1), we require that
\begin{equation}\label{eq:A1}
    \{\sigma: |\Gamma(\sigma)| > l\} \text{ is dense in } 2^{<\omega} \text{ for each } l.
\end{equation}
So we have the following requirements:
\begin{equation*}
    P(\sigma,l): (\exists \tau \supset \sigma) |\Gamma(\tau)| > l.
\end{equation*}
A $P(\sigma,l)$-module picks some $\tau \supset \sigma$ and defines $\Gamma(\tau)$ as some $\xi$ of length at least $l+1$. The set in \eqref{eq:A1} will be not only dense but also recursive. Hence $\Gamma(G)$ is total for every $1$-generic $G$.

\subsection{Meeting (A2)}

Suppose that $X = \Gamma(G)$ is total. We replace (A2) with the following requirement for each $\Sigma^0_n$ formula $\theta$:
\begin{multline}\label{eq:A2}
    (\forall x \not\in G)(\exists \sigma \supset G \uh x) \sigma \Vdash \theta(\Gamma(G), x) \to \\
    \{\sigma: (\exists x) \sigma(x) = 1 \wedge \sigma \Vdash \theta(\Gamma(G), x)\} \text{ is dense along } G.
\end{multline}
As $n > 0$ and $\Gamma$ satisfies \eqref{eq:A1}, $\sigma \Vdash \theta(\Gamma(G), x)$ is a $\Sigma^0_n$ predicate of $G, \sigma$ and $x$. Assume that $G = \{x: \neg \ \theta(\Gamma(G), x)\}$ for a $\Sigma^0_n$ formula $\theta$. By the $n$-genericity of $G$, the premise of \eqref{eq:A2} holds. Applying \eqref{eq:A2} and the $n$-genericity of $G$ again, there exists $\sigma \subset G$ such that $\sigma(x) = 1$ and $\sigma \Vdash \theta(\Gamma(G),x)$ and thus $\theta(\Gamma(G),x)$ holds. So we have a desired contradiction. This proves that \eqref{eq:A2} implies (A2).

The strategy to meet \eqref{eq:A2} is as follows: For each $\rho$, we construct a recursive set $D \subset 2^{<\omega}$ dense below $\rho\<0\>$, and for each $\sigma \in D$ assign some $\tau \supseteq \rho \<1\>$ and guarantee that if $\sigma$ forces a $\Pi^0_{n-1}$ formula $\varphi$ of $\Gamma(G)$ then $\tau$ forces $\varphi$ too. We call this $\tau$ the \emph{deputy} of $\sigma$. Assume that every deputy meets our expectation and the premise of \eqref{eq:A2} holds. We write each $\Sigma^0_n$ formula $\theta$ as $(\exists y) \varphi$ where $\varphi$ is $\Pi^0_{n-1}$. Fix $x \not\in G$ and let $\rho = G \uh x$, and assume that $\sigma \Vdash \theta(\Gamma(G), x)$ for some $\sigma \supset \rho$. Then either $\sigma(x) = 1$ or some $\sigma' \supseteq \sigma$ forces $\varphi(\Gamma(G),x,y)$ and has a deputy $\tau \supset \rho\<1\>$. Hence $\tau(x) = 1$ and $\tau \Vdash \varphi(\Gamma(G),x,y)$ as well. So the above strategy guarantees \eqref{eq:A2}.

Suppose that $\rho \subset \rho \<0\> \subset \sigma$ and $\tau$ is the deputy of $\sigma$. In order to figure out how to make $\tau$ do its job as the deputy of $\sigma$, let us begin with small $n$'s.
\begin{itemize}
    \item If $n = 1$ and $\varphi$ is a $\Pi^0_0$ formula then there is a recursive subset $P$ of $2^{<\omega}$ such that $\sigma \Vdash \varphi(\Gamma(G))$ if and only if $\Gamma(\sigma)$ extends an element of $P$. So it suffices to define $\Gamma(\tau) \supset \Gamma(\sigma)$. Note that this is the key ingredient of Lemma 5.2 in Jockusch \cite{Jockusch:1980.generic}.
    \item Suppose that $n = 2$. We need another layer of deputies. To distinguish deputies at different layers, we call $\tau$ the $1$-deputy of $\sigma$. We pick $0$-deputies $\mu \supset \sigma$ for $\nu$ densely below $\tau$ and make these deputies act as in the above paragraph. To ensure that the above paragraph could be applied to $0$-deputies, we certainly need that $\Gamma(\tau) \supseteq \Gamma(\sigma)$. With this strategy, if $\varphi = (\forall y) \psi$ where $\psi$ is $\Sigma^0_0$ and $\tau \not\Vdash \varphi(\Gamma(G))$, then some $\nu \supset \tau$ forces some $\neg \psi(\Gamma(G),k)$ and so does the $0$-deputy $\mu \supset \sigma$ of some extension of $\nu$. Hence $\sigma \not\Vdash \varphi(\Gamma(G))$ either. The reader may have noticed the similarity between the above argument and that following the proof of \cite[Lemma 5.10]{Jockusch:1980.generic}.
\end{itemize}
Note that each $\sigma$ may extend more than one $\rho \<0\>$. So to be precise, the corresponding $(n-1)$-deputy should be called something like the $(\rho,n-1)$-deputy of $\sigma$. For simplicity, we only define deputies for sequences of fresh lengths at each stage. We also require that each $\sigma$ has at most one deputy and thus it cannot have deputies at different layers. So the deputy of $\sigma$ is unambiguous. But we may still use the terms $k$-deputy and $(\rho,k)$-deputy occasionally. For bookkeeping purpose, we simultaneously construct a partial recursive injection $d$ such that if $d(\sigma)$ is defined then it is the deputy of $\sigma$.

Based on these simple examples, for fixed positive integer $n$ and every $\rho$ and $x = |\rho|$, we define a $(\rho,n-1)$-deputy extending $\rho\<1\>$ for each $\sigma$ in a recursive set dense below $\rho\<0\>$. If $0 < k < n$ and $\tau$ is defined as the $k$-deputy of $\sigma$ then for $\nu$ in a recursive set dense below $\tau$ we define its $(\tau,k-1)$-deputy $\mu = d(\nu)$ to be some extension of $\sigma$. As in the above examples, if $\mu = d(\nu)$ then we always ensure that $\Gamma(\mu) \supset \Gamma(\nu)$.

We present the above strategy in a more formal fashion as follows. Firstly we design a hierarchy of requirements:
\begin{gather*}
    D(0, \sigma, \tau): \Gamma(\sigma) \subseteq \Gamma(\tau),\\
    D(k, \sigma, \tau): \{\nu: (\exists \mu \supset \sigma)(D(k-1, \nu, \mu) \text{ is satisfied})\} \text{ is dense below } \tau
\end{gather*}
for $0 < k < n$, and
\begin{equation*}
    D(\rho): \{\sigma: (\exists \tau \supset \rho \<1\>)(D(n-1,\sigma,\tau) \text{ is satisfied})\} \text{ is dense below } \rho \<0\>.
\end{equation*}
A $D(k,\sigma,\tau)$ applies only if $\tau$ is defined as the $k$-deputy of $\sigma$ and requires that $\tau$ fulfills its job as a deputy. The $D(\rho)$ requires that $(n-1)$-deputies are defined for elements in a set dense below $\rho\<0\>$.

A $D(\rho)$-module acts as below:
\begin{enumerate}
    \item Pick the least $\mu \supset \rho \<0\>$ such that no $\sigma \supseteq \mu$ has a $(\rho,n-1)$-deputy defined.
    \item Pick $\sigma \supset \mu$ with fresh length and define $d(\sigma)$, the $(n-1)$-deputy of $\sigma$, as some $\tau \supset \rho\<1\>$.
    \item Extend the approximation of $\Gamma$ so that $\Gamma(\sigma) \subset \Gamma(\tau)$.
\end{enumerate}

For $D(k,\sigma,\tau)$ with $0 < k < n$, if at a $D(k,\sigma,\tau)$-stage $\tau$ is already defined as the $k$-deputy of $\sigma$ then the $D(k,\sigma,\tau)$-module acts like a $D(\rho)$-module:
\begin{enumerate}
    \item Pick the least $\bar{\nu} \supset \tau$ such that no $\nu \supseteq \bar{\nu}$ has its $(\tau,k-1)$-deputy defined.
    \item Pick $\nu \supset \bar{\nu}$ with fresh length and define $d(\nu)$, the $(k-1)$-deputy of $\nu$, as some $\mu \supset \sigma$.
    \item Extend the approximation of $\Gamma$ so that $\Gamma(\mu) \supset \Gamma(\nu)$.
\end{enumerate}

A $D(0,\sigma,\tau)$-module simply does nothing, because if $\tau = d(\sigma)$ then $D(0,\sigma,\tau)$ is automatically satisfied when $d(\sigma)$ is defined by a $D(\rho)$- or $D(1,-)$-module.

Figure \ref{fig:deputies} shows a partial sample of the construction of deputies for $n = 2$, where $\tau_i$'s are the $1$-deputies of $\sigma_i$'s and $\mu_i$'s the $0$-deputies of $\nu_i$'s. It might worth notice that in the sample given in Figure \ref{fig:deputies} the $0$-deputies of $\nu_i$'s extending $\tau_0$ need not to form a set dense below $\sigma_0$ though the $\nu_i$'s having deputies are required to be dense below $\tau_0$. In general, if $\tau$ is the $k$-deputy of $\sigma$ and $k > 0$, then the extensions of $\tau$ having $(k-1)$-deputies are necessarily dense below $\tau$, but their deputies are not dense below $\sigma$.

\begin{figure}
    \input{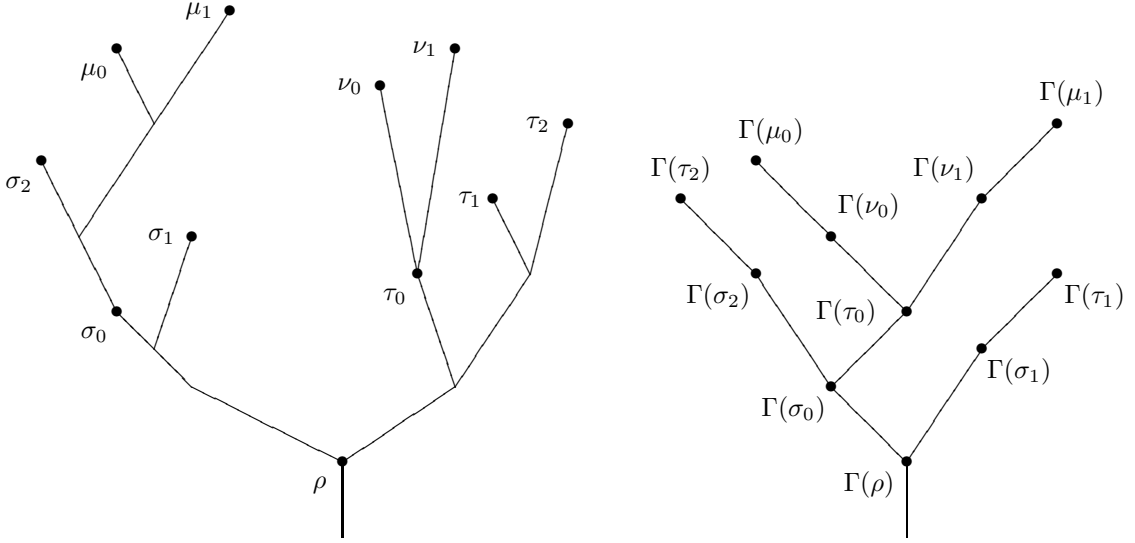}
    \caption{The construction of deputies for $n = 2$.}
    \label{fig:deputies}
\end{figure}

\begin{lemma}\label{lem:good-deputy}
If $D(n-k,\sigma,\tau)$ is satisfied and $\sigma \Vdash \varphi(\Gamma(G))$ for some $\Pi^0_{n-k}$ formula $\varphi$ then $\tau \Vdash \varphi(\Gamma(G))$.
\end{lemma}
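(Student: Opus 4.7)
The plan is to proceed by induction on $k$ going from $n$ down to $1$ --- equivalently, by induction on the deputy level $n-k$ running from $0$ up to $n-1$. This matches the design of the $D$-requirements: an $(n{-}k)$-deputy is built to transfer $\Pi^0_{n-k}$ information to $\tau$, and the nested density clause in $D(n-k,\sigma,\tau)$ exists precisely to peel off one universal quantifier at a time by deferring to $(n-k-1)$-deputies.

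The base case $k = n$ is immediate: $D(0,\sigma,\tau)$ reads $\Gamma(\sigma) \subseteq \Gamma(\tau)$, and a $\Pi^0_0$ formula is decided by finitely many bits of its input, so if $\Gamma(\sigma)$ already decides $\varphi(\Gamma(G))$ positively then so does $\Gamma(\tau)$, giving $\tau \Vdash \varphi(\Gamma(G))$.

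For the inductive step I fix $1 \le k < n$ and assume the result at level $n-k-1$. Write $\varphi \in \Pi^0_{n-k}$ as $\forall y\, \psi(y)$ with $\neg \psi \in \Pi^0_{n-k-1}$, and suppose toward contradiction that $\tau \not\Vdash \varphi(\Gamma(G))$. By the standard Cohen-forcing semantics some $\nu \supseteq \tau$ and some $y_0$ satisfy $\nu \Vdash \neg \psi(y_0, \Gamma(G))$. The premise $D(n-k, \sigma, \tau)$ makes the set of $\nu'$ possessing $(n-k-1)$-deputies above $\sigma$ dense below $\tau$, so I pick $\nu' \supseteq \nu$ together with $\mu \supseteq \sigma$ witnessing $D(n-k-1, \nu', \mu)$. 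Monotonicity of forcing gives $\nu' \Vdash \neg \psi(y_0)$, and the inductive hypothesis upgrades this to $\mu \Vdash \neg \psi(y_0)$. But $\mu \supseteq \sigma$ and $\sigma \Vdash \forall y\, \psi(y)$, contradicting the fact that no extension of $\sigma$ can force $\neg \psi(y_0)$.

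Essentially all of the work has already been absorbed into the construction of the $D$-hierarchy, so the lemma is a formal verification rather than a fresh piece of mathematics. The one step worth flagging is the standard Cohen-forcing fact that $\tau \not\Vdash \forall y\, \psi(y)$ yields an extension $\nu \supseteq \tau$ and a witness $y_0$ with $\nu \Vdash \neg \psi(y_0)$; this is what drives the chain downward through the deputy layers and couples the quantifier complexity of $\varphi$ to the deputy level $n-k$.
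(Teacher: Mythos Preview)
Your proof is correct and follows essentially the same approach as the paper's: both argue by induction on the deputy level, handle the base case $D(0,\sigma,\tau)$ via $\Gamma(\sigma)\subseteq\Gamma(\tau)$, and for the inductive step assume $\tau\not\Vdash\varphi$, pass to an extension forcing $\neg\psi(y_0)$, use the density clause of $D(n-k,\sigma,\tau)$ to find a lower-level deputy pair, and invoke the induction hypothesis to contradict $\sigma\Vdash\varphi$. The only cosmetic difference is that the paper indexes the induction by $n-k$ directly rather than by $k$.
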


\begin{proof}
The case for $n-k = 0$ holds as in the above examples.

Suppose that $n-k > 0$, $D(n-k,\sigma,\tau)$ is satisfied and $\tau \not\Vdash \varphi(\Gamma(G))$ for some $\Pi^0_{n-k}$ formula $\varphi$. Write $\varphi$ as $(\forall y) \psi$ where $\psi$ is $\Sigma^0_{n-k-1}$. Then there exist $j$ and $\bar{\nu} \supset \tau$ such that $\bar{\nu} \Vdash \neg \psi(\Gamma(G),j)$. Pick $\nu \supseteq \bar{\nu}$ and $\mu \supset \sigma$ such that $D(n-k-1,\nu,\mu)$ is satisfied. By the induction hypothesis, $\mu \Vdash \neg \psi(\Gamma(G),j)$ too. So $\sigma \not\Vdash \varphi(\Gamma(G))$.
\end{proof}

\subsection{Meeting (A3-4)}\label{ss:Q}

To meet (A3-4), we have requirements demanding the convergence of $\Delta_k(\Gamma(G); \vec{x})$ for appropriate $\vec{x}$'s. Each module of these requirements is assigned a specific triple $(k,\sigma,\vec{x})$ and picks a pair $(\tau,i)$ such that $\tau \supset \sigma$ and $i < 2$, then it tries to guarantee that $\tau \Vdash \Delta_k(\Gamma(G); \vec{x}) \downarrow = i$. To achieve their goals, these modules must collaborate. Suppose that $k > 0$ and $\tau \Vdash \Delta_k(\Gamma(G); \vec{x}) \downarrow = i$ is to be ensured. The responsible module $M$ (of $k$-th layer) picks a threshold $\bar{x}$ and demands that if a $(k-1)$-th layer module $M'$ works to guarantee $\xi \Vdash \Delta_{k-1}(\Gamma(G); \vec{x},x) \downarrow = j$ for $\xi$ extending $\tau$ and $x > \bar{x}$ then $j$ must equal $i$. Such a demand by the $k$-th layer module $M$ is called a $(k-1)$-th layer \emph{constraint}. For $k=1$ the constraint means that if $\Delta(\Gamma(\xi); \vec{x},x)$ is defined for $\xi \supset \tau$ and $x > \bar{x}$ then its value must be $i$. For $k > 1$, the module $M'$ imposes a $(k-2)$-th layer constraint similarly. So the collaboration is carried out by imposing cascading constraints and obeying constraints by modules of higher layers.

Below we refine the above strategy. We shall have $n$-layers of requirements. But by (A2), $\Delta_{n-1}$ cannot be total. So the $(n-1)$-th layer is different. At the $(n-1)$-th layer, for each $\sigma$ and $x$ such that $|\sigma| > x$, we have the following requirement
$$
  Q(n-1,\sigma,x): \sigma(x)=1 \leftrightarrow (\exists \tau \supseteq \sigma) \tau \Vdash \Delta_{n-1}(\Gamma(G); x) \downarrow = 1.
$$
The $Q(n-1,\sigma,x)$-module acts according to the first of the followings that applies:
\begin{itemize}
    \item If $n = 1$, $\sigma(x) = 1$ and $\Delta(\Gamma(\tau); x) \uparrow$ for all $\tau \supseteq \sigma$ then pick $\tau \supset \sigma$ and extend $\Gamma$ and $\Delta$ so that $\Delta(\Gamma(\tau); x) = 1$. If $n = 1$ and $\sigma(x) = 0$ then do nothing.
    \item Assume that $n > 1$ and $\sigma(x) = 1$. Pick $\tau \supset \sigma$ of fresh length and a fresh \emph{threshold} $\bar{x}$ and impose a \emph{$Q$-constraint} at the $(n-2)$-th layer denoted by $c(n-2,\tau,x,\bar{x},1)$, unless there is already some $c(n-2,\zeta,x,-)$ such that $\zeta$ and $\sigma$ are comparable (in this case either $\sigma$ or an extension of $\sigma$ is expected to force $\Delta_{n-1}(\Gamma(G); x) \downarrow$). All $Q(n-2,\rho,-)$-modules with $\rho$ extending $\tau$ will obey the above constraint. As we shall see, obeying $c(n-2,\tau,x,\bar{x},1)$ guarantees the desired forcing relation, i.e., $\tau \Vdash \Delta_{n-1}(\Gamma(G); x) \downarrow = 1$ or more precisely $\tau \Vdash (\forall x_1 > \bar{x}) \Delta_{n-2}(\Gamma(G); x, x_1) \downarrow = 1$. In the next paragraph we shall see how a $Q$-module at lower layer obeys constraints imposed by $Q$-modules at higher layers.
    \item Suppose that $n > 1$ and $\sigma(x) = 0$. Pick $\tau$ and $\bar{x}$ as above and also a fresh $x_1$. If $n = 2$ then extend $\Delta$ and $\Gamma$ so that $\Delta(\Gamma(\tau); x,x_1) \downarrow = 0$, and if $n > 2$ then impose a constraint $c(n-3,\tau,\<x x_1\>,\bar{x},0)$, so that $\tau \Vdash \Delta_{n-2}(\Gamma(G); x,x_1) \downarrow = 0$. In this way, the $Q(n-1,\rho,x)$-modules for $\rho \supset \sigma$ together prevent $\sigma$ forcing $\Delta_{n-1}(\Gamma(G); x) \downarrow = 1$.
\end{itemize}

If $0 < k < n-1$, for every $\sigma$ and every $\vec{x}$ such that $|\vec{x}| = n-k$ and $\max \vec{x} < |\sigma|$, we have the following requirement:
$$
    Q(k,\sigma,\vec{x}): (\exists \tau \supset \sigma, i < 2) \tau \Vdash \Delta_{k}(\Gamma(G); \vec{x}) \downarrow = i.
$$
The $Q(k,\sigma,\vec{x})$-module firstly picks $\tau \supset \sigma$ of fresh length and a fresh threshold $\bar{x}$. Then it acts according to the first of the following three cases that applies:
\begin{itemize}
    \item If there exists $c(k-1,\xi,\vec{x},-)$ such that $\xi$ and $\sigma$ are comparable then it does nothing, since for $\tau$ being the longest of $\sigma$ and $\xi$ we can expect $\tau$ witnessing the satisfaction of the requirement.
    \item If some $c = c(k,\xi,\vec{x} \uh (n-k-1),\bar{y},i)$ is active at $\sigma$ and $\bar{y} < x_{n-k-1}$, $x_{n-k-1}$ being the last element of $\vec{x}$, then this $Q$-module obeys the constraint $c$ by imposing a new constraint $c(k-1,\tau,\vec{x},\bar{x},i)$. In other words, if some $Q$-module at layer $k+1$ demands that $\xi \Vdash (\forall x > \bar{y}) \Delta_k(\Gamma(G); x_0,\ldots,x_{n-k-2},x) \downarrow = i$ then the $Q(k,\sigma,\vec{x})$-module demands that $\tau \Vdash \Delta_k(\Gamma(G); \vec{x}) \downarrow = i$.
    \item If none of the above cases applies then the $Q(k,\sigma,\vec{x})$-modules imposes a constraint $c(k-1,\tau,\vec{x},\bar{x},\sigma(x_0))$.
\end{itemize}

The $Q(0,\sigma,x)$'s for $n=1$ are instances of $Q(n-1,-)$'s. Suppose that $n > 1$. For each $\sigma$ and $\vec{x}$ such that $|\vec{x}| = n$ and $|\sigma| > \max \vec{x}$, we have the following requirement:
\begin{equation*}
    Q(0,\sigma,\vec{x}): (\exists \tau \supset \sigma, i < 2) \Delta(\Gamma(\tau); \vec{x}) \downarrow = i.
\end{equation*}
A $Q(0,\sigma,\vec{x})$-module acts only if $\Delta(\Gamma(\tau); \vec{x})$ is undefined for any $\tau$ extending $\sigma$. If it acts, the $Q(0,\sigma,\vec{x})$-module firstly picks $\tau \supset \sigma$ of fresh length. If a constraint $c(0,\xi,\vec{x} \uh n-1,\bar{y},i)$ is active at $\sigma$ and $\bar{y} < x_{n-1}$, $x_{n-1}$ being the last element of $\vec{x}$, then this $Q$-module defines $\Delta(\Gamma(\tau); \vec{x}) \downarrow = i$ so that it obeys the constraint above; otherwise it defines $\Delta(\Gamma(\tau); \vec{x}) \downarrow = \sigma(x_0)$, $x_0$ being the first element of $\vec{x}$.

If the construction were carried out only by $Q$-modules, $\Delta_k(\Gamma(G))$ would be total for all $k < n-1$ and all $1$-generic $G$ and the expected forcing relations would be secured. It seems that we would even have the totality of $\Delta_{n-1}(\Gamma(G))$ and
\begin{equation}\label{eq:Delta-n-1-total}
    G(x_0) = \Delta_{n-1}(\Gamma(G); x_0).
\end{equation}
However, \eqref{eq:Delta-n-1-total} contradicts (A2), since the totality of each $k < n - 1$ implies that $\Delta_{k}(\Gamma(G); x_0,\ldots,x_{n-k-1}) \downarrow = i$ is a $\Sigma^0_{k+1}$ predicate in $\Gamma(G)$ for each $k < n$. This contradiction will be dissolved in the next subsection when we try to bring the $D$- and $Q$-modules together.

\subsection{Conflicts}\label{ss:conflicts}

To integrate the modules, we must resolve the following issues.
\begin{enumerate}
    \item The functionals $\Gamma$ and $\Delta$ are defined by various modules. For the consistency of the two functionals, all modules should follow some code when defining $\Gamma$ and $\Delta$.
    \item From subsection \ref{ss:Q}, we can see that the $Q$-modules are designed to collaborate, i.e., lower layer modules obey constraints imposed by higher layer ones. However, the construction of deputies by $D$-modules could introduce some problems. For, if a $D(k,\sigma,\tau)$-module defines a deputy $\mu = d(\nu)$ for $\nu \supset \tau$, then $\Gamma(\mu)$ and $\Delta(\Gamma(\mu))$ extend $\Gamma(\nu)$ and $\Delta(\Gamma(\nu))$ respectively, and it is not obvious that $\Delta(\Gamma(\nu))$ is submitted to $Q$-constraints active at $\mu$.
    \item The problem discussed at the end of the previous subsection \ref{ss:Q}.
\end{enumerate}

For (1), we setup two codes to be followed by every module at every stage:
\begin{itemize}
    \item[(C1)] $\Gamma(\sigma) = \Gamma(\sigma\<1^k\>)$ for all $\sigma$ and $k$.
    \item[(C2)] $\Delta(\xi) = \Delta(\xi\<1^k\>)$ for all $\xi$ and $k$, i.e., $\Delta(\xi\<1^k\>; \vec{x}) \downarrow = i$ if and only if $\Delta(\xi; \vec{x}) \downarrow = i$.
\end{itemize}
If these codes are followed then it is easy to see that we can build valid functionals in some appropriate way. To follow (C1-2), we explicitly define $\Gamma(\sigma)$ and $\Delta(\sigma)$ only for $\sigma$ ending with $0$.

To examine issue (2) concretely, let us consider the guiding example below: $n = 3$, $\tau \supset \rho\<1\>$ is the $2$-deputy of $\sigma \supset \rho\<0\>$, and some $\nu$ extending $\tau$ has a $1$-deputy $\mu$ extending $\sigma$. Let $x = |\rho|$. Suppose that the $Q(2,\rho\<0\>,x)$-module imposes a constraint $c_0' = c(0,\eta',\<x x_1\>,\bar{x}_1',0)$ as $(\rho\<0\>)(x) = 0$. Since $\tau(x) = 1$, naturally the $Q(2,\tau,x)$-module may impose a constraint $c_1 = c(1,\zeta,x,\bar{x},1)$. Moreover, suppose that later a $Q(1,\upsilon,\<x x_1\>)$-module with $\upsilon \supset \zeta$ obeys $c_1$ and imposes another $c_0 = c(0,\eta,\<x x_1\>,\bar{x}_1,1)$. It could be that $c_0'$ is active at $\sigma$ and $c_0$ active at $\nu$, since $2$-deputies are defined for densely many extensions of $\rho\<0\>$ and $1$-deputies for densely many extensions of $\tau$. At some $\theta \supset \mu$, suppose that for $\vec{x} = \<x x_1 x_2\>$ with $x_2 > \max\{\bar{x}_1,\bar{x}_1'\}$, $c_0'$ is obeyed and $\Delta(\Gamma(\theta); \vec{x}) = 0$ is defined. Later the $D(1,\nu,\mu)$-module defines a $0$-deputy $d(\theta) \supset \nu$. Since $\Gamma(d(\theta)) \supset \Gamma(\theta)$, $\Delta(\Gamma(d(\theta)); \vec{x}) = 0$ too. So $c_0$ and the forcing relation $\eta \Vdash (\forall x_2 > \bar{x}_1) \Delta(\Gamma(G); x,x_1,x_2) = 1$ are broken though $c_0$ is active at $d(\theta)$.

But if we examine the motivation of deputies then we can see a natural solution. Recall that if $\tau$ is the $k$-deputy of $\sigma$ and $\sigma$ forces a $\Pi^0_k$ predicate $\varphi(\Gamma(G))$ of $\Gamma(G)$ then $\tau$ should force $\varphi(\Gamma(G))$ as well. In the above example, the $Q$-constraint $c_0'$ demands that $\eta'$ forces the $\Pi^0_2$ predicate
\begin{equation}\label{eq:Q-constraint-1}
    (\forall x_2 > \bar{x}_1') (\Delta(\Gamma(G); x,x_1,x_2) \downarrow = 0).
\end{equation}
Since the $Q$-modules make $\Delta(\Gamma(G))$ a total $2$-valued function, \eqref{eq:Q-constraint-1} is equivalent to the following $\Pi^0_1$ predicate of $\Gamma(G)$:
\begin{equation}\label{eq:Q-constraint-2}
    (\forall x_2 > \bar{x}_1') \neg (\Delta(\Gamma(G); x,x_1,x_2) \downarrow = 1).
\end{equation}
As $c_0'$ is active at $\sigma$ and $\tau$ is the $2$-deputy of $\sigma$, $\tau$ should also be demanded to force \eqref{eq:Q-constraint-2}. When the $D(\rho)$-module defines $\tau = d(\sigma)$, it should impose a $Q$-constraint $c(0,\tau,\<x x_1\>,\bar{x}_1',0)$, which has all its parameters except $\tau$ copied from $c_0'$ and thus is called a \emph{replica} of $c_0'$. This action is called \emph{replication} or \emph{replication of constraints}. For the consistency of this action, we need two new codes:
\begin{itemize}
    \item[(C3)] If a $D$-module defines a deputy $\tau = d(\sigma)$ then $\tau = \rho\<1^k0\>$ with a fresh $k$ and an appropriate $\rho$.
    \item[(C4)] If a $Q$-constraint is active at any $\rho\<1^k\>$ then it is active at $\rho$.
\end{itemize}
With (C3-4), when the $D(\rho)$-module in the guiding example defines $d(\sigma)$, it can define the deputy to be some $\tau = \rho\<1^l0\>$ such that every constraint active at $\rho\<1^l\>$ is active at $\rho$. Hence if the constraints active at $\sigma$ are consistent then after replication the constraints active at $\tau$ are consistent as well. In general, if a $D$-module defines a $k$-deputy $\tau = d(\sigma)$ and a $Q$-constraint $c = c(j,\zeta,-)$ is active at $\sigma$ with $j < k$, then this $D$-module should impose a new $Q$-constraint like $c(j, \tau, -)$ which is called a \emph{replica} of $c(j,\zeta,-)$ and has its parameters except $\tau$ identical with those of $c$. Actually, it suffices to replicate $c(j,\zeta,-)$ as above and satisfying an additional condition that $\rho \subset \zeta \subseteq \sigma$ if $\tau$ is defined as the $(\rho,k)$-deputy of $\sigma$.

Assume that the replication strategy is implemented and (C3-4) are followed. Let us examine the guiding example again. When the $D(\rho)$-module acts, it can pick $\tau$ so that no constraints like $c_1$ or $c_0'$ are active at $\tau$ and then it replicates constraints active at $\sigma$ to $\tau$. So if $c_0'$ were already active at $\sigma$ then a replica of $c_0'$ would be active at $\tau$ and thus no constraint like $c_0$ would ever appear above $\tau$. Hence the definition of $d(\theta)$ later would not break any constraint. Suppose that no constraint like $c_0'$ is active at $\sigma$ but $c_0$ is active at $\nu$. When the $D(2,\sigma,\tau)$-module acts, it defines $d(\nu)$ to be some $\mu \supset \sigma$ and replicates $c_0$ to $c_0'' = c(0,\mu,\<x x_1\>,\bar{x}_1,1)$. Hence if later $\Delta(\Gamma(\theta); \vec{x})$ is defined it must equal $1$. Thus $c_0$ will not be broken by the definition of $d(\theta)$. Thus we solve the conflict in the guiding example.

Notice that as in subsection \ref{ss:Q}, no $c(n-2,\sigma,x,\bar{x},0)$ would ever be imposed by $Q$-modules. So the replication strategy will never produce any $c(n-2,\tau,x,\bar{x},0)$. Furthermore, it is necessary that a $Q$-constraint $c(j,-)$ with $k \leq j$ active at $\nu$ will not be replicated to the $k$-deputy $\mu$ of $\nu$. As an illustration, in the example above, if $c_1$ were replicated to $\mu$ when $\mu$ is defined as the $1$-deputy of $\nu$ then we would have $\mu \Vdash \Delta_2(\Gamma(G); x) = 1$ while $\mu(x) = 0$.

Replication of constraints also solves the issue (3) above. Let us go back to the guiding example. Recall that $x = |\rho|$. By the $Q(2,-)$-modules, the following set is dense below $\rho\<0\>$:
$$
    \{\xi: \text{there exists some } c(0,\xi,\<x y_1\>,\bar{y}_1,0)\}.
$$
So we may assume that some $c(0,\xi,\<x y_1\>,\bar{y}_1,0)$ above is active at $\sigma$, meaning that $\sigma \Vdash \Delta_1(\Gamma(G); x,y_1) = 0$. By the $Q(1,-)$-modules, the following set is dense below $\zeta$:
$$
    \{\eta: \text{there exists some } c(0,\eta,\<x x_1\>,\bar{x}_1,1)\}.
$$
As $(\tau,1)$-deputies are defined for densely many extensions of $\zeta$, some $c(0,\eta,-)$ as above will have a replica active above $\sigma$, implying that some extension $\lambda$ of $\sigma$ will force $\Delta_1(\Gamma(G); x,x_1) = 1$ for some $x_1$. Hence, for a generic $G$ extending $\lambda$, we have $\Delta_1(\Gamma(G); x,y_1) = 0$ and $\Delta_1(\Gamma(G); x,x_1) = 1$. Due to the construction of deputies and replication of constraints, there are infinitely many such oscillations along $G$, causing $\Delta_2(\Gamma(G); x)$ undefined. On the other hand, no $\pi \supseteq \rho\<1\>$ could have a $2$-deputy above $\rho\<0\>$. So such oscillations cannot happen infinitely often to any generic $H \supset \rho\<1\>$ and $\Delta_2(\Gamma(H); x)$ would equal $1$.

Moreover, we must guarantee that
\begin{itemize}
    \item[(C5)] $\tau(y) = 1$ for all $y < |\sigma|$ with $\sigma(y) = 1$ if $\tau$ is the $(n-1)$-deputy of $\sigma$.
\end{itemize}
For, if $\sigma(y) = 1$ then there could be some $c(n-2,\zeta,y,1)$ active at $\sigma$ which demands $\sigma$ forcing $\Delta_{n-1}(\Gamma(G); y) \downarrow = 1$ and would be replicated when $\tau = d(\sigma)$ is defined. Certainly we do not want to have any $c(n-2,\eta,y,1)$ active at $\tau$ if $\tau(y) = 0$ as we cannot allow $\tau$ forcing $\Delta_{n-1}(\Gamma(G); y) \downarrow = 1 \neq \tau(y)$. But the above condition can be easily met by choosing $\tau = \rho\<1^p0\>$ with $p$ fresh when a $D(\rho)$-module defines a $(n-1)$-deputy.

\subsection{The construction}\label{ss:construction}

Here we formally present how the construction proceeds by stage. To help the reader recall the motivations, some explanations are given in brackets. As mentioned, we recursively assign countably many stages to each requirement so that every stage is assigned to exactly one requirement. At an $R$-stage $s$, the $R$-module acts as below.

(1) $P(\sigma,m)$-module: If there does not exist any $\tau \supseteq \sigma$ with $|\Gamma(\tau)| > m$ then pick a fresh number $l$ and let $\tau = \sigma\<1^l0\>$ and $\Gamma(\tau) = \Gamma(\sigma)\<1^l0\>$.

(2) $D(0,\sigma,\tau)$-modules simply do nothing.

(3) $D(k,\sigma,\tau)$-module where $k > 0$: If $\tau$ is not yet the $k$-deputy of $\sigma$ then proceed to stage $s+1$. Suppose that $\tau$ is the $k$-deputy of $\sigma$. Pick the least $\bar{\nu} \supseteq \tau$ such that \emph{no} $\nu \supseteq \bar{\nu}$ has its $(\tau,k-1)$-deputy defined. Pick a fresh number $l$ and let $\nu = \bar{\nu}\<1^l0\>$. Let the $(\tau,k-1)$-deputy $d(\nu)$ of $\nu$ be $\sigma\<1^{|\nu|}0\>$ and let $\Gamma(d(\nu)) = \Gamma(\nu)\<1^{|\nu|}0\>$. Moreover, for every $Q$-constraint $c(j,\zeta,\vec{x},\bar{x},i)$ with $j < k-1$ and $\tau \subset \zeta \subset \nu$, impose a new $Q$-constraint $c(j,d(\nu),\vec{x},\bar{x},i)$. [The first half defines $(k-1)$-deputies for $\nu$ in a set dense below $\tau$ as extensions of $\sigma$. The second half replicates $Q$-constraints active at $\nu$ to $d(\nu)$, except those active at $\tau$, so that if $\nu$ is expected to force $\Delta_{k-1}(\Gamma(G); \vec{x}) \downarrow = i$ then so is $d(\nu)$.]

(4) $D(\rho)$-module: Pick the least $\bar{\sigma} \supseteq \rho\<0\>$ such that \emph{no} $\sigma \supseteq \bar{\sigma}$ has its $(\rho,n-1)$-deputy defined. Pick a fresh number $l$ and let $\sigma = \bar{\sigma}\<1^l0\>$. Let the $(\rho,n-1)$-deputy $d(\sigma)$ of $\sigma$ be $\rho\<1^{|\sigma|}0\>$ and let $\Gamma(d(\sigma)) = \Gamma(\sigma)\<1^{|\sigma|}0\>$. Moreover, for every $Q$-constraint $c(j,\zeta,\vec{x},\bar{x},i)$ such that $\rho \subset \zeta \subset \sigma$, impose a new $Q$-constraint $c(j,d(\sigma),\vec{x},\bar{x},i)$. [It is similar to $D(k,-)$-modules.]

(5) $Q(0,\sigma,\vec{x})$-module for $n > 1$: If $\Delta(\Gamma(\tau); \vec{x})$ is defined for some $\tau$ comparable with $\sigma$ then proceed to stage $s+1$. Suppose that $\Delta(\Gamma(\tau); \vec{x})$ is undefined for all $\tau$ comparable with $\sigma$. Pick a fresh number $l$ and let $\xi = \Gamma(\sigma\<1^l0\>) = \Gamma(\sigma)\<1^l0\>$. If some $Q$-constraint $c(0,\rho,\vec{x} \uh n-1,\bar{x},i)$ is active at $\sigma$ and $x_{n-1} > \bar{x}$, $x_{n-1}$ being the last element of $\vec{x}$, then let $\Delta(\xi; \vec{x}) = i$. Otherwise let $\Delta(\xi; \vec{x}) = \sigma(x_0)$. [If some $Q$-constraint active at $\sigma$ demands $\Delta_1(\Gamma(G); \vec{x} \uh n-1) \downarrow = i$ being forced then the current $Q$-module obeys this constraint by making $\xi \Vdash \Delta(\Gamma(G); \vec{x}) \downarrow = i$.]

(6) $Q(k,\sigma,\vec{x})$-module where $n-1 > k > 0$: If some $c(k-1,\rho,\vec{x},\bar{x},i)$ is active with $\rho$ and $\sigma$ comparable then proceed to stage $s+1$. Suppose that there is \emph{no} $Q$-constraint as above. Pick a fresh number $l$ and let $\tau = \sigma\<1^l0\>$. If some $Q$-constraint $c(k,\rho,\vec{x} \uh (n-k-1),\bar{x},j)$ is active at $\sigma$ and $x_{n-k-1} > \bar{x}$, $x_{n-k-1}$ being the last element of $\vec{x}$, then let $i = j$, otherwise let $i = \sigma(x_0)$. Impose a new $Q$-constraint $c(k-1,\tau,\vec{x},l,i)$. [If $\sigma$ is expected to force $\Delta_{k+1}(\Gamma(G); \vec{x} \uh (n-k-1)) \downarrow = j$ then the $Q(k,\sigma,\vec{x})$-module obeys the corresponding constraint by imposing a lower layer constraint so that eventually $\tau$ forces $\Delta_{k}(\Gamma(G); \vec{x}) \downarrow = j$.]

(7) $Q(n-1,\sigma,x)$-module: There are two cases below.

(7a) $n = 1$. If $\sigma(x) = 0$ or $\Delta(\Gamma(\rho); x) \downarrow$ for some $\rho$ comparable with $\sigma$ then proceed to stage $s+1$. Suppose that $\sigma(x) = 1$ and $\Delta(\Gamma(\rho); x) \uparrow$ for any $\rho$ comparable with $\sigma$. Pick a fresh number $l$. Define $\xi = \Gamma(\sigma\<1^l0\>) = \Gamma(\sigma)\<1^l0\>$ and $\Delta(\xi; x) = 1$.

(7b) $n > 1$. Firstly pick a fresh number $l$. Act according to the first of the followings that applies
\begin{itemize}
    \item If there already exists a constraint $c(n-2,\zeta,x,-)$ such that $\zeta$ and $\sigma$ are comparable then proceed to stage $s+1$.
    \item If $\sigma(x) = 1$ then impose a constraint $c(n-2,\sigma\<1^l0\>,x,l,1)$. [So the collaboration of $Q$-modules will make $\sigma\<1^l0\>$ forcing $(\forall x_1 > l) \Delta_{n-2}(\Gamma(G); x, x_1) \downarrow = 1$.]
    \item If $\sigma(x) = 0$ and $n = 2$ then let $\xi = \Gamma(\sigma\<1^l0\>) = \Gamma(\sigma)\<1^l0\>$ and let $\Delta(\xi; x,l) = 0$.
    \item If $\sigma(x) = 0$ and $n > 2$ then impose a constraint $c(n-3,\sigma\<1^l0\>,\<x l\>,l,0)$. [It is expected that $\sigma\<1^l0\> \Vdash \Delta_{n-2}(\Gamma(G); x, l) \downarrow = 0$ and thus $\sigma$ does not force $\Delta_{n-1}(\Gamma(G); x) \downarrow = 1$.]
\end{itemize}

\subsection{The verification}\label{ss:verification}

We verify that the construction produces the desired functionals by proving a series of lemmas.

\begin{lemma}\label{lem:codes}
(C1-5) hold.
\end{lemma}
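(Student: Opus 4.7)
The plan is to verify each of (C1)--(C5) by induction on the stage of the construction, treating each code separately and tracking which module actions in \S\ref{ss:construction} could possibly threaten it. Since $\Gamma$, $\Delta$, and the set of active $Q$-constraints only grow during the construction, it suffices to show that the action prescribed for each module preserves the codes, given that they hold at the end of the previous stage.

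For (C1) and (C2), I would inspect clauses (1), (3), (4), (5), (7a), and (7b) and observe that every explicit assignment to $\Gamma$ or $\Delta$ is made at a string of the form $\xi\<1^l0\>$ with $l$ fresh. Under the stated convention that $\Gamma$ and $\Delta$ are extended continuously from these explicit values, (C1) and (C2) then follow for free: the longest explicitly defined prefix of $\sigma$ and of $\sigma\<1^k\>$ must coincide, since appending $1$'s cannot create a new terminal $0$.

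For (C3), both $D(k,-)$- and $D(\rho)$-modules in clauses (3) and (4) define the deputy as $\rho\<1^m0\>$ where $m$ equals either $|\nu|$ or $|\sigma|$, each of which strictly exceeds the fresh parameter $l$ chosen at that stage; hence $m$ is itself fresh. For (C5), since $(n-1)$-deputies are introduced only by $D(\rho)$-modules, I would just compute directly with $d(\sigma) = \rho\<1^{|\sigma|}0\>$: the bits of $d(\sigma)$ agree with $\rho$ below position $|\rho|$ and equal $1$ on positions $|\rho|, \ldots, |\rho|+|\sigma|-1$, which together cover every $y < |\sigma|$ with $\sigma(y) = 1$ (noting that $\sigma(|\rho|) = 0$ since $\sigma$ extends $\rho\<0\>$).

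The only mildly delicate code is (C4), and this is the place where I expect the main attention, though even here the work is really bookkeeping. The plan is to establish the invariant that every $Q$-constraint ever imposed has its first coordinate $\zeta$ ending in $0$: those imposed by $Q$-modules in clauses (6) and (7b) are anchored at $\sigma\<1^l0\>$, while those imposed by replication in clauses (3) and (4) are anchored at $d(\nu) = \sigma\<1^{|\nu|}0\>$ or $d(\sigma) = \rho\<1^{|\sigma|}0\>$; no other action produces constraints. Once this invariant is in place, (C4) is immediate, since if $\zeta \subseteq \rho\<1^k\>$ then the terminal $0$ of $\zeta$ must occur at some position $< |\rho|$ (positions $|\rho|, \ldots, |\rho|+k-1$ of $\rho\<1^k\>$ are all $1$), so $|\zeta| \leq |\rho|$ and $\zeta \subseteq \rho$, i.e.\ the constraint was already active at $\rho$.
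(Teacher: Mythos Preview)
Your proposal is correct and follows the same approach the paper gestures at: the paper's proof is the single sentence ``It can be proved by an easy induction on stages,'' and you have simply carried out that induction in full, checking each module's action against each code. Your key observation---that every explicit definition of $\Gamma$, $\Delta$, and every $Q$-constraint is anchored at a string ending in $0$---is exactly the reason the paper sets things up this way (cf.\ the sentence ``To follow (C1-2), we explicitly define $\Gamma(\sigma)$ and $\Delta(\sigma)$ only for $\sigma$ ending with $0$'' in \S\ref{ss:conflicts}), so there is nothing genuinely different here.
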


\begin{proof}
It can be proved by an easy induction on stages.
\end{proof}

It is rather easy to see that the $P$'s are satisfied.

\begin{lemma}\label{lem:Gamma}
$\Gamma$ is well-defined and $\Gamma(G)$ is total for every $1$-generic $G$.
\end{lemma}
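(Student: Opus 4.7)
My plan is to split the claim into (i) well-definedness of $\Gamma$ as a continuous partial recursive function $2^{<\omega} \to 2^{<\omega}$, and (ii) totality of $\Gamma(G)$ for every $1$-generic $G$.

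For (i), I would inspect \S\ref{ss:construction} and observe that every explicit assignment to $\Gamma$---whether made by a $P$-module, a $D(k,\sigma,\tau)$-module with $k>0$, a $D(\rho)$-module, a $Q(0,\sigma,\vec{x})$-module, or a $Q(n-1,\sigma,x)$-module (cases (7a) and the third bullet of (7b))---takes the uniform shape
\[
   \Gamma(\sigma'\<1^l0\>) \;=\; \Gamma(\sigma')\<1^l0\>,
\]
for some $\sigma'$ already in the domain of $\Gamma$ and some \emph{fresh} number $l$. Freshness guarantees that $\sigma'\<1^l0\>$ is a brand new argument, so no two stages compete for it and no conflict arises; the inclusion $\Gamma(\sigma') \subseteq \Gamma(\sigma'\<1^l0\>)$ is immediate from the shape above. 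Extending $\Gamma$ to the intermediate strings $\sigma'\<1^j\>$ with $0 \le j \le l$ by copying $\Gamma(\sigma')$, as dictated by (C1) and recorded in Lemma \ref{lem:codes}, agrees with all previously issued values. A short induction on stages then shows that $\Gamma$ stays a well-defined continuous partial recursive function throughout the construction.

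For (ii), I would fix $m$ and consider the set $E_m = \{\sigma : \Gamma(\sigma){\downarrow} \text{ and } |\Gamma(\sigma)| > m\}$, which is clearly r.e.\ since the construction is recursive. To prove $E_m$ dense I would appeal to the $P(\sigma,m)$-modules: at any $P(\sigma,m)$-stage, either some extension of $\sigma$ already belongs to $E_m$, or the module itself creates one by picking $l$ large enough (and fresh) that $|\Gamma(\sigma)| + l + 1 > m$ and declaring $\Gamma(\sigma\<1^l0\>) = \Gamma(\sigma)\<1^l0\>$. Since $G$ is $1$-generic and $E_m$ is dense and r.e., some initial segment $\tau \subset G$ lies in $E_m$, so $|\Gamma(G)| \ge |\Gamma(\tau)| > m$. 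As $m$ was arbitrary, $\Gamma(G)$ is total.

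The only genuine obstacle is bookkeeping: confirming that ``fresh'' is preserved uniformly across the several distinct module types which can write on $\Gamma$, and that the implicit definitions demanded by (C1) never disagree with any explicit one. Once this is pinned down, the density-and-$1$-generic portion is entirely standard, and the lemma follows.
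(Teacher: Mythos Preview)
Your treatment of totality is fine, but the well-definedness argument has a real gap in the $D$-module case. You assert that \emph{every} explicit assignment to $\Gamma$ has the shape $\Gamma(\sigma'\<1^l0\>) = \Gamma(\sigma')\<1^l0\>$, but look again at the $D(k,\sigma,\tau)$- and $D(\rho)$-modules in \S\ref{ss:construction}: when $\mu = d(\nu)$ is defined, the module sets $\mu = \sigma\<1^{|\nu|}0\>$ and $\Gamma(\mu) = \Gamma(\nu)\<1^{|\nu|}0\>$. Here the new argument $\mu$ extends $\sigma$, but the new value extends $\Gamma(\nu)$, and $\nu$ is \emph{incomparable} with $\sigma$ (indeed $\nu \supset \tau$, and $\sigma,\tau$ lie on opposite sides of some splitting node). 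So this assignment does not fit your uniform pattern, and the monotonicity check $\Gamma(\sigma) \subseteq \Gamma(\mu)$ is not ``immediate from the shape above.''

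This is precisely the point where the paper's proof does work that yours omits. The paper carries along, as part of the induction on stages, the auxiliary invariant that $\Gamma(\tau) \supset \Gamma(\sigma)$ whenever $\tau = d(\sigma)$. With that invariant in hand, the $D(k,\sigma,\tau)$-stage is handled by the chain
\[
   \Gamma(\mu) \supset \Gamma(\nu) \supseteq \Gamma(\tau) \supset \Gamma(\sigma),
\]
which simultaneously gives continuity at $\mu$ and re-establishes the invariant for the newly created pair $(\nu,\mu)$. Without this invariant (recorded separately as Lemma~\ref{lem:d-Gamma} and used again later), there is no reason the value $\Gamma(\nu)\<1^{|\nu|}0\>$ should extend $\Gamma(\sigma)$, and your freshness-only bookkeeping does not supply one.
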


\begin{proof}
Suppose that at the beginning of stage $s$ we have:
\begin{enumerate}
    \item $\Gamma$ is well-defined, and
    \item $\Gamma(\tau) \supset \Gamma(\sigma)$ whenever $\tau = d(\sigma)$.
\end{enumerate}
Note that $\Gamma$ is extended at stage $s$ only if $s$ is a $P$-, $D(k, -)$- ($k > 0$), $D(\rho)$- or $Q(0,-)$-stage.

If $s$ is a $P(\sigma,m)$-stage then (1) holds at the end of stage $s$, by the induction hypothesis and (C1). Also (2) holds at the end of stage $s$ as well, since $d$ is not changed at stage $s$.

Suppose that $k > 0$ and $s$ is a $D(k,\sigma,\tau)$-stage. Then $\tau = d(\sigma)$. Suppose that the $D(k,\sigma,\tau)$-module picks $\nu \supset \tau$ and defines $d(\nu) = \mu \supset \sigma$. By the induction hypothesis,
$$
    \Gamma(\mu) \supset \Gamma(\nu) \supseteq \Gamma(\tau) \supset \Gamma(\sigma).
$$
So both (1) and (2) hold at the end of stage $s$.

The case for $D(\rho)$-stage is similar to that of $D(k,-)$-stage. And that of $Q(0,-)$-stage is similar to that of $P$-stage. So $\Gamma$ is well-defined.

Clearly every $P(\sigma,m)$ is satisfied and the set
$$
  \{\sigma: |\Gamma(\sigma)| > m\}
$$
is recursive and dense for every $m$. Hence $\Gamma(G)$ is total for every $1$-generic $G$.
\end{proof}

By the proof of Lemma \ref{lem:Gamma}, we have the following fact of deputies.

\begin{lemma}\label{lem:d-Gamma}
If $\tau = d(\sigma)$ then $\Gamma(\tau) \supset \Gamma(\sigma)$.
\end{lemma}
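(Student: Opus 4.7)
The plan is to read this statement off directly from the inductive bookkeeping already carried out in the proof of Lemma \ref{lem:Gamma}. That proof maintained, as its clause~(2), the invariant that $\Gamma(\tau) \supset \Gamma(\sigma)$ whenever $\tau = d(\sigma)$, so once the invariant is reconfirmed in the limit there is nothing further to do.

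First I would observe that $d$ is only ever extended in two places in subsection \ref{ss:construction}: in clause~(3), by a $D(k,\sigma,\tau)$-module with $k>0$, and in clause~(4), by a $D(\rho)$-module. In each of these clauses, the instruction that assigns $d(\nu) = \mu$ is immediately followed by the declaration $\Gamma(\mu) = \Gamma(\nu)\<1^{|\nu|}0\>$ (respectively $\Gamma(d(\sigma)) = \Gamma(\sigma)\<1^{|\sigma|}0\>$), which is, by construction, a proper extension of the value of $\Gamma$ at the primal string at that stage. So at the stage where the deputy pair is born, the required strict containment holds in the current finite approximation of $\Gamma$.

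Next I would appeal to the consistency part of Lemma \ref{lem:Gamma}: once a value of $\Gamma$ is specified at some string at some stage, later stages only enlarge the domain of $\Gamma$ by fresh, longer strings of the form $\sigma\<1^l0\>$ (respecting codes (C1) and (C3)), and never overwrite an earlier value. Hence both $\Gamma(\mu)$ and $\Gamma(\nu)$ agree, in the final limit functional, with the values declared at the stage $d(\nu)=\mu$ was introduced, and the strict containment $\Gamma(d(\nu)) \supset \Gamma(\nu)$ persists.

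There is no genuine obstacle here; the lemma is essentially a corollary of the invariant already secured in the proof of Lemma \ref{lem:Gamma}, extracted and stated separately for later reference. The only point worth flagging in the write-up is that $d$ is a (partial recursive) injection, so each pair $(\sigma,\tau)$ with $\tau = d(\sigma)$ is born at exactly one stage and the prescription there is unambiguous.
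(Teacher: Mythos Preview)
Your proposal is correct and takes essentially the same approach as the paper: the paper does not give a separate proof at all, but simply records Lemma~\ref{lem:d-Gamma} as a consequence of the invariant~(2) already established in the inductive proof of Lemma~\ref{lem:Gamma}. Your elaboration of why that invariant is born and preserved at the $D(k,\sigma,\tau)$- and $D(\rho)$-stages is accurate and matches the paper's reasoning.
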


Next we turn to the $Q$'s. It is important to learn how deputies and $Q$-constraints interact.

\begin{lemma}\label{lem:replication}
Suppose that $\tau_k$ is the $k$-deputy of $\sigma_k$ and $\rho$ is the greatest common initial segment of $\sigma_k$ and $\tau_k$.
\begin{enumerate}
    \item If $k = n-1$ then every $Q$-constraint $c(m,\eta,-)$ active at $\tau_k$ is either active at $\rho$ or a replica of some $c(m,\zeta,-)$ with $\rho \subset \zeta \subset \sigma_k$.
    \item If $k < n-1$ then there exist $\sigma_{k+1}$ and $\tau_{k+1}$ such that
    \begin{enumerate}
    	\item $\sigma_{k+1} \subset \sigma_k$ and $\tau_{k+1} \subset \tau_k$;
    	\item $\sigma_{k+1}$ is the $(k+1)$-deputy of $\tau_{k+1}$ and $\tau_k$ is the $(\sigma_{k+1},k)$-deputy of $\sigma_k$;
    	\item every $Q$-constraint $c(m,\eta,-)$ active at $\tau_k$ is either active at $\tau_{k+1}$ or a replica of some $c(m,\zeta,-)$ with $\sigma_{k+1} \subset \zeta \subset \sigma_k$.
    \end{enumerate}
    \item If $m < k$ then for every $c(m,\eta,\vec{x},\bar{x},i)$ active at $\tau_k$ there exists $c(m,\zeta,\vec{x},\bar{x},i)$ active at $\sigma_k$ and for every $c(m,\mu,\vec{y},\bar{y},j)$ active at $\sigma_k$ there exists $c(m,\nu,\vec{y},\bar{y},j)$ active at $\tau_k$.
\end{enumerate}
\end{lemma}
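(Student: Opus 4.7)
My plan is to identify the unique $D$-module that created $\tau_k = d(\sigma_k)$ and use the explicit replication rules in steps~(3) and~(4) of subsection~\ref{ss:construction} to describe the $Q$-constraints sitting on prefixes of $\tau_k$. If $k = n-1$, then $\tau_k$ was defined by a $D(\rho)$-module, and $\rho$ coincides with the greatest common initial segment of $\sigma_k$ and $\tau_k$ because $\tau_k = \rho\<1^{|\sigma_k|}0\>$ while $\sigma_k \supseteq \rho\<0\>$. If $k < n-1$, then $\tau_k$ was defined by a $D(k+1, \tau_{k+1}, \sigma_{k+1})$-module in which $\sigma_{k+1} = d(\tau_{k+1})$ is a $(k+1)$-deputy; $\sigma_k$ is picked as an extension of $\sigma_{k+1}$, and $\tau_k = \tau_{k+1}\<1^{|\sigma_k|}0\>$ extends $\tau_{k+1}$. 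The roles of ``original'' and ``deputy'' swap one layer up, because the higher-layer $D$-module takes extensions $\nu$ of its own deputy as the originals at layer $k$ and defines their deputies as extensions of its own original. This already gives clauses (a) and (b) of part~(2).

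Every $Q$-constraint imposed by the construction sits on a string of the form $\xi\<1^l 0\>$ for a freshly chosen $l$. Applied to the shapes $\tau_k = \rho\<1^{|\sigma_k|}0\>$ and $\tau_k = \tau_{k+1}\<1^{|\sigma_k|}0\>$, the only prefixes of $\tau_k$ ending in $0$ are the prefixes of $\rho$ (resp.\ $\tau_{k+1}$) together with $\tau_k$ itself. Therefore every constraint active at $\tau_k$ either sits on a prefix of the common initial segment (and is automatically active at $\rho$, resp.\ at $\tau_{k+1}$) or sits on $\tau_k$ itself. Constraints sitting on $\tau_k$ can only have been imposed at $\tau_k$'s creation stage, since $d$ is a partial recursive injection and all other impose-actions target brand-new fresh strings. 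The replication rules then identify these exactly: in part~(1), step~(4) produces the replicas of $c(m, \zeta, -)$ with $\rho \subset \zeta \subset \sigma_k$ (no layer bound); in part~(2), step~(3) produces the replicas of $c(m, \zeta, -)$ with $m < k$ and $\sigma_{k+1} \subset \zeta \subset \sigma_k$. Freshness of $\sigma_k$ also prevents any constraint on a proper prefix of $\sigma_k$ from being added after the creation stage, so ``active at the creation stage'' matches ``active at the end of the construction''. This establishes~(1) and the constraint clause of~(2).

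Part~(3) follows by downward induction on $k$, using (1) or (2) as structural input. Given a layer-$m$ constraint $c(m, \eta, -)$ active at $\tau_k$ with $m < k$: if $\eta \subseteq \rho$ (part~(1)) the same constraint is already active at $\sigma_k$ with $\zeta = \eta$; if $\eta \subseteq \tau_{k+1}$ (part~(2)), the inductive hypothesis at layer $k+1$ applied to the pair $(\tau_{k+1}, \sigma_{k+1})$ (where $\sigma_{k+1}$ plays the role of deputy) supplies a matching $c(m, \zeta, -)$ active at $\sigma_{k+1} \subset \sigma_k$; otherwise $\eta = \tau_k$ is a replica whose original $c(m, \zeta, -)$ is by definition of the replication step active at some $\zeta \subsetneq \sigma_k$. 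The reverse direction is symmetric: a constraint at $\sigma_k$ either sits on $\rho$ (resp.\ $\sigma_{k+1}$) and survives to $\tau_k$ directly (resp.\ via induction on the $(k+1)$-deputy pair), or it sits strictly between the base and $\sigma_k$, in which case replication installs a matching replica at $\tau_k$ precisely because $m < k$. The only bookkeeping subtlety is the layer threshold: step~(3)'s restriction $j < k$ is exactly what forces the hypothesis ``$m < k$'' in part~(3), while in part~(1) this hypothesis is automatic, since every $Q$-constraint has layer $\leq n-2 < n-1 = k$.
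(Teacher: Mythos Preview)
Your proposal is correct and follows essentially the same approach as the paper's proof: parts (1) and (2) are unpacked directly from the $D(\rho)$- and $D(k+1,\tau_{k+1},\sigma_{k+1})$-modules respectively, and part (3) is proved by downward induction on $k$ using (1) as the base case and (2c) for the inductive step. Your version simply fills in the bookkeeping details (the shape $\tau_k = \rho\langle 1^{|\sigma_k|}0\rangle$ or $\tau_{k+1}\langle 1^{|\sigma_k|}0\rangle$, the freshness argument that no constraint can sit at $\sigma_k$ itself or be added later at a proper prefix, and the observation that the layer bound $j<k$ in step~(3) is exactly why the hypothesis $m<k$ is needed) that the paper leaves implicit under the phrase ``trivial facts of the construction.''
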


\begin{proof}
(1) and (2) are trivial facts of the construction.

For $k = n-1$, (3) follows from the fact that $\tau_k = d(\sigma_k)$ is defined by the $D(\rho)$-module and every $Q$-constraint active at $\sigma_k$ either is active at $\rho$ or has a replica active at $\tau_k$. Suppose that $n-1 > k > m$. There exist $\sigma_{k+1}$ and $\tau_{k+1}$ as in (2). If $c(m,\eta,-)$ is active at $\tau_{k}$ then by (2c) above it is either active at $\tau_{k+1}$ or a replica of some $c(m,\zeta,-)$ active at $\sigma_k$. So the conclusion either follows from the induction hypothesis or holds trivially. For $c(m,\mu,-)$ active at $\sigma_k$, if $\sigma_{k+1} \subset \mu$ then it has a replica $c(m,\tau_k,-)$; otherwise it is active at $\sigma_{k+1}$ and the conclusion follows from the induction hypothesis.
\end{proof}

The following fact implies that $Q$-constraints are arranged consistently.

\begin{lemma}\label{lem:Q-constraints-cons-1}
For each $k, \vec{x},\mu$ there exists at most one $c(k,\zeta,\vec{x},\bar{x},i)$ active at $\mu$.
\end{lemma}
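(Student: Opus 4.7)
The plan is a straightforward induction on stages. Initially no $Q$-constraints exist, so the claim holds vacuously. Assuming uniqueness at every node $\mu$ holds at the start of stage $s$, I would analyze every module that can impose new $Q$-constraints (the $P$- and $Q(0,-)$-modules do not). The interpretation used throughout is that $c(k,\zeta,\vec{x},\bar{x},i)$ is active at $\mu$ exactly when $\zeta\subseteq\mu$; by (C3) every constraint position is of the form $\rho\<1^p0\>$ and hence ends in $0$, which (together with (C4)) keeps this reading consistent with the rest of the construction.

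For a $Q(k,\sigma,\vec{x})$-module with $0<k<n-1$ (and analogously the $Q(n-1,\sigma,x)$-module in case (7b)), the module imposes $c(k-1,\tau,\vec{x},l,i)$ at $\tau=\sigma\<1^l0\>$ with $l$ fresh, but only after checking that no $c(k-1,\rho,\vec{x},\bar{x},i')$ has $\rho$ comparable with $\sigma$. Suppose another $c(k-1,\zeta,\vec{x},-,-)$ were active at some $\mu\supseteq\tau$. Then $\zeta\subseteq\mu$ and $\tau\subseteq\mu$, so $\zeta$ and $\tau$ are comparable. If $\tau\subsetneq\zeta$, the freshness of $\tau$ rules out $\zeta$ being pre-existing; if $\zeta\subseteq\tau=\sigma\<1^l0\>$, the fact that $\zeta$ ends in $0$ forces either $\zeta\subseteq\sigma$ (whence $\zeta$ is comparable with $\sigma$, contradicting the module's check) or $\zeta=\tau$ (the new constraint itself). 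This closes the $Q$-module cases.

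The main obstacle is the replication performed by the $D$-modules, e.g. the $D(k,\sigma,\tau)$-module creating $d(\nu)=\mu=\sigma\<1^{|\nu|}0\>$ and copying every $c(j,\zeta,\vec{x},\bar{x},i)$ with $j<k-1$ and $\tau\subsetneq\zeta\subsetneq\nu$ to $c(j,\mu,\vec{x},\bar{x},i)$. The induction hypothesis applied at $\nu$ ensures that at most one source constraint exists for each pair $(j,\vec{x})$, so at most one replica is imposed per pair. It remains to rule out a conflict between the replica at $\mu$ and some pre-existing $c(j,\eta,\vec{x},-,-)$ with $\eta\subseteq\mu'$ for some $\mu'\supseteq\mu$. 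As $\mu=\sigma\<1^{|\nu|}0\>$ with $|\nu|$ fresh and all constraint positions end in $0$, any such $\eta\neq\mu$ must satisfy $\eta\subseteq\sigma$. Using Lemma~\ref{lem:replication}(3) (or, directly, the fact that a constraint with $\eta\subseteq\sigma$ was already active at the stage when $\tau=d(\sigma)$ was defined, and was either already active at $\tau$ or replicated to $\tau$), such an $\eta$ yields a $c(j,\eta',\vec{x},-,-)$ active at $\nu$. The inductive hypothesis at $\nu$ then forces this to be the \emph{only} constraint at $\nu$ for $(j,\vec{x})$, so no source in the open interval $(\tau,\nu)$ exists, hence no conflicting replica was created. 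The $D(\rho)$-module is treated identically with $\rho,\sigma$ in place of $\tau,\nu$.

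The only real subtlety is the interaction just described between pre-existing constraints active below the deputy's base and the newly-created replica; once this is unwound using (C3), (C4), Lemma~\ref{lem:replication}, and the inductive hypothesis at the source node, the induction goes through and the lemma follows.
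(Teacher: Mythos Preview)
Your plan is correct and follows the same induction-on-stages approach as the paper: the $Q$-module cases are handled by the module's explicit comparability check plus freshness of $\tau$, and the $D$-module case by transporting a pre-existing constraint at $\eta\subseteq\sigma$ to one active at $\tau$ (hence at $\nu$) via Lemma~\ref{lem:replication}(3), whereupon the inductive hypothesis at $\nu$ rules out any source in $(\tau,\nu)$ and hence any conflicting replica. One small caution: your parenthetical ``direct'' alternative (that such a constraint was either already active at $\tau$ or replicated to $\tau$ when $d(\sigma)$ was defined) is only literally true when $k'=n-1$; for $k'<n-1$ a constraint with $\eta\subseteq\sigma_{k'+1}$ need not lie below $\tau_{k'+1}$, so you really do need the inductive content of Lemma~\ref{lem:replication}(3) rather than a one-step replication argument.
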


\begin{proof}
Suppose that the lemma holds at the end of stage $s-1$ and at least one new constraint is imposed at stage $s$. We prove that the lemma also holds at the end of stage $s$. Since new $Q$-constraints are imposed only at $Q$- or $D$-stages, there are three cases.

(i) $s$ is a $Q$-stage. The conclusion is guaranteed by the construction.

(ii) $s$ is a $D(k',\sigma,\tau)$-stage. Then $\tau$ is the $k'$-deputy of $\sigma$. Suppose that the $D$-module picks $\bar{\nu} \supset \tau$ and $\nu = \bar{\nu}\<1^p0\>$ and defines $\mu = d(\nu) = \sigma\<1^q0\>$. It suffices to prove that if $c = c(k,\zeta,\vec{x},\bar{x},i)$ and $c' = c(k,\eta,\vec{x},\bar{y},j)$ are active at $\mu$ then $\zeta = \eta$, $\bar{x} = \bar{y}$ and $i = j$. If neither $\zeta$ nor $\eta$ equals $\mu$ then by the construction both constraints are imposed before stage $s$ and thus coincide by the induction hypothesis. Assume that $\zeta = \mu$ and $c$ is imposed at stage $s$. So $k'-1 > k$ and $c$ is a replica of some $c_0 = c(k,\zeta_0,-)$ active at $\nu$ and imposed before stage $s$. If $c'$ is active at $\sigma$, then by (3) of Lemma \ref{lem:replication} and that $k' > k$ there exists $c'_1 = c(k,\rho,\vec{x},\bar{y},j)$ active at $\tau$. So both $c_0$ and $c'_1$ are active at $\nu$ and imposed before stage $s$. Hence $c_0$ and $c'_1$ coincide by the induction hypothesis. However, this coincidence implies that $c_0$ is active at $\tau$ and has no replica imposed at stage $s$ by the $D(k',\sigma,\tau)$-module and thus contradicts $c$ being a replica of $c_0$. If $c'$ is not active at $\sigma$ then $c'$ is a replica of some $c'_0 = c(k,\eta_0,-)$ by (2b) of Lemma \ref{lem:replication}. So both $c_0$ and $c_0'$ are active at $\nu$ and imposed before stage $s$. By the induction hypothesis again, $c_0$ and $c_0'$ coincide. Then $c$ and $c'$ are both replicas of the same constraint imposed at stage $s$ and thus coincide as well.

(iii) $s$ is a $D(\rho)$-stage. The proof for this case is similar to that for (ii).
\end{proof}

\begin{lemma}\label{lem:Delta}
$\Delta$ is well-defined.
\end{lemma}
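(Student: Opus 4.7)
The plan is to mimic the inductive structure of the proof of Lemma \ref{lem:Gamma}: argue by induction on stages that the partial map $\Delta$ built so far is consistent, in the sense that no pair $(\xi,\vec{x})$ receives two different values either directly or through the closure rule (C2). Stages at which $\Delta$ is not touched are trivial, so the only work is in those stages that extend $\Delta$.

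Inspecting the construction, the only subcases that actually declare new values of $\Delta$ are step (5), step (7a), and the $n=2$ branch of step (7b) with $\sigma(x)=0$; in every such case exactly one new value of the form $\Delta(\Gamma(\sigma\<1^l 0\>); \vec{x}) = i$ is written down, with $l$ a freshly chosen natural number. By (C1) this input equals $\Gamma(\sigma)\<1^l 0\>$, and a routine induction on stages (using that every extension of $\Gamma$ in steps (1), (3), (4), (5), (7a), (7b) follows the uniform recipe $\Gamma(\tau\<1^l 0\>) = \Gamma(\tau)\<1^l 0\>$) shows that every explicitly declared $\Gamma$-value is either empty or ends in $0$. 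Hence every input at which $\Delta$ is explicitly defined ends in $0$, and its terminal block of consecutive $1$'s (the run lying between the final $0$ and either the preceding $0$ or the start of the string) has length exactly $l$. Thus $l$ is recoverable from the input, and since fresh numbers used at distinct stages are distinct, two different stages can never declare $\Delta$ at the same input, ruling out any direct conflict.

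For the implicit extension via (C2), note that $\Delta(\xi;\vec{x}) = \Delta(\xi\<1^k\>;\vec{x})$ forces agreement between two explicit declarations only when one input is obtained from the other by appending $1$'s. Since every explicit input ends in $0$, such a relation requires $k=0$ and the two inputs to coincide, a case already excluded by the previous paragraph. Only one explicit declaration is made per stage, so no within-stage conflict can arise either. This completes the induction and yields the lemma.

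No real obstacle is anticipated: the delicate bookkeeping was already carried out for $\Gamma$ in Lemma \ref{lem:Gamma}, and the analogous argument for $\Delta$ reduces to the observation that the freshness of $l$ makes the input strings at distinct stages visibly distinct while their uniform \emph{ends-in-$0$} shape neutralises any interaction with (C2).
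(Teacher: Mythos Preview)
Your argument is correct for what it claims: freshness of $l$ makes the explicit inputs $\Gamma(\sigma)\langle 1^l 0\rangle$ pairwise distinct across stages, and since every such input ends in $0$, the closure rule (C2) cannot manufacture a collision. This disposes of the ``two stages write at the same place'' worry, and your observation that every explicitly declared $\Gamma$-value ends in $0$ is exactly what is needed to recover $l$ from the input.

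The paper, however, is aiming at a different issue. In step (5) the value $i$ that the $Q(0,\sigma,\vec{x})$-module writes is read off an active constraint $c(0,\rho,\vec{x}\upharpoonright(n-1),\bar{x},i)$; the instruction ``let $\Delta(\xi;\vec{x}) = i$'' presupposes that this $i$ is unambiguous. If two such constraints with different last coordinates were simultaneously active at $\sigma$, the module's action would be ill-defined \emph{within a single stage}, before any cross-stage comparison arises. The paper's one-line proof invokes Lemma \ref{lem:Q-constraints-cons-1} precisely to rule this out, and then dismisses the cross-stage question with the phrase ``follows from (C2)'' --- essentially taking your freshness observation for granted.

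So the two proofs emphasise complementary halves: you spell out carefully why distinct stages cannot collide, while the paper spells out why a single stage's action is determinate. A fully explicit argument needs both. Your write-up would be complete after adding one sentence citing Lemma \ref{lem:Q-constraints-cons-1} to justify that the constraint consulted in step (5) is unique, so that ``exactly one new value'' really is the case.
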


\begin{proof}
Lemma \ref{lem:Q-constraints-cons-1} implies that a $Q(0,-)$ module never encounters conflicting $c(0,-)$'s. So the validity of $\Delta$ follows from (C2).
\end{proof}

A constraint imposed before or at stage $s$ is a constraint \emph{active} at stage $s$. An active constraint is one active at some stage. If some $c(k,\zeta,\vec{x},\bar{x},i)$ is active then we expect that $\zeta$ forces $\Delta_k(\Gamma(G); \vec{x}, x) \downarrow = i$ for all $x > \bar{x}$. The next lemma is important for this end.

\begin{lemma}\label{lem:Q-thresholds}
Suppose that $c(k,\zeta,\vec{x},\bar{x},i)$ is active, $\zeta$ and $\eta$ are comparable and $x > \bar{x}$.
\begin{enumerate}
    \item If there also exists an active $c(k-1,\eta,\vec{x}\<x\>,\bar{y},j)$ then $\eta \supseteq \zeta$.
    \item If $k = 0$, $\Delta(\Gamma(\eta); \vec{x},x) \downarrow$ but $\Delta(\Gamma(\eta \uh (|\eta| - 1)); \vec{x},x) \uparrow$ then $\eta \supseteq \zeta$.
\end{enumerate}
\end{lemma}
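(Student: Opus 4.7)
I would prove (1) and (2) simultaneously by induction on stages: assuming the lemma holds at the end of stage $s-1$, I verify that no violation is introduced at stage $s$. So at least one of the two items involved (the two constraints in (1), or the constraint and the $\Delta$-value in (2)) must be freshly produced at stage $s$.

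Write $c = c(k,\zeta,\vec{x},\bar{x},i)$ and $c' = c(k-1,\eta,\vec{x}\<x\>,\bar{y},j)$. If both are new at stage $s$, then $s$ is necessarily a $D$-stage (a $Q$-module imposes at most one constraint per action) and every constraint it imposes shares the single location $d(\nu)$, so $\zeta = \eta$. If only $c'$ is new, then the length parameter used for $\eta$ (either the fresh $\bar{y}$ in $\eta = \eta_0\<1^{\bar{y}}0\>$ coming from a $Q$-action, or a fresh number inside the construction of $\nu''$ for $\eta = d(\nu'')$ coming from a $D$-action) exceeds all prior numbers, so $|\eta| > |\zeta|$, forcing $\eta \supsetneq \zeta$ by comparability. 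If $c$ is newly imposed by a $Q$-module, then its fresh threshold $\bar{x}$ would have to exceed the prior $\bar{y}$, contradicting $\bar{y} > x > \bar{x}$.

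The delicate case is $c$ a replica imposed by $D(k''',\sigma''',\tau''')$ at $\zeta = d(\nu''') = \sigma'''\<1^{|\nu'''|}0\>$, replicating an original $c_0 = c(k,\zeta_0,\vec{x},\bar{x},i)$ with $\tau''' \subset \zeta_0 \subset \nu'''$ and $k < k''' - 1$. Freshness of the length parameter of $\nu'''$ gives $|\zeta| > |\eta|$, hence $\eta \subsetneq \zeta$; since constraint locations always end in $0$, this forces $\eta \subseteq \sigma'''$, so $c'$ is active at $\sigma'''$. Applying Lemma~\ref{lem:replication}(3) to the deputy pair $(\sigma''',\tau''')$ at layer $k - 1 < k'''$ produces an active $c''' = c(k-1,\nu,\vec{x}\<x\>,\bar{y},j)$ with $\nu \subseteq \tau''' \subsetneq \zeta_0$. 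Since both $c_0$ and $c'''$ are prior to stage $s$ with comparable locations, the inductive hypothesis of (1) yields $\nu \supseteq \zeta_0$, contradicting $\nu \subsetneq \zeta_0$.

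Part (2) follows the same template, with $\Delta(\Gamma(\eta);\vec{x},x)$ playing the role of $c'$ and its defining moment being the unique $Q(0,\sigma_0,\vec{x}\<x\>)$-action that creates $\eta = \sigma_0\<1^{l'}0\>$ with fresh $l'$ and sets $\Delta(\Gamma(\eta);\vec{x},x) = j$. The freshness subcases transfer verbatim. In the new-replica subcase for $c$, Lemma~\ref{lem:d-Gamma} gives $\Gamma(\tau''') \supset \Gamma(\sigma''') \supseteq \Gamma(\eta)$, so $\Delta(\Gamma(\tau''');\vec{x},x) \downarrow$; letting $\eta^*$ denote the shortest prefix of $\tau'''$ at which $\Delta$ converges, the hypothesis of (2) is satisfied for $\eta^*$, and the inductive hypothesis of (2) applied to $(c_0,\eta^*)$ yields $\eta^* \supseteq \zeta_0$, contradicting $\eta^* \subseteq \tau''' \subsetneq \zeta_0$. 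The main obstacle throughout is this replica subcase: because replica thresholds are inherited from much earlier stages, freshness alone does not rule them out, and one must transport the lower-layer witness to the opposite deputy branch via Lemma~\ref{lem:replication}(3) in order to invoke the induction.
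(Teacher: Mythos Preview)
Your argument is correct and follows the paper's proof closely; the paper organizes the induction as a minimal-counterexample argument on $|\zeta|$ rather than an induction on stages, but the decisive step in both is identical: in the replica subcase one transports the lower-layer witness across the deputy pair via Lemma~\ref{lem:replication}(3) and obtains a shorter (earlier) counterexample. One small slip: in the ``$c$ new by a $Q$-module'' case your chain ``$\bar{y} > x > \bar{x}$'' is not what you want (there is no hypothesis $\bar{y} > x$); the point is simply that the fresh threshold $\bar{x}$ exceeds the parameter $x$ of the pre-existing $c'$, contradicting $x > \bar{x}$.
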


\begin{proof}
(1) Suppose that $\zeta$ is the shortest witnessing the failure of clause (1), i.e., there exist active $c_k = c(k,\zeta,\vec{x},\bar{x},i)$ and $c_{k-1} = c(k-1,\eta,\vec{x}\<x\>,\bar{y},j)$ and $x > \bar{x}$ but $\eta \subset \zeta$. Let $s$ and $s_0$ be the stages at which $c_k$ and $c_{k-1}$ are imposed respectively. Then $s \geq s_0$, since constraints are always imposed at strings of fresh length. If $s = s_0$ then $s$ must be a $D$-stage as at a $Q$-stage at most one constraint is imposed. Then $\eta = \zeta$ by the $D$-modules, contradicting the choice of $\zeta$. Hence $s > s_0$.

If $s$ is a $Q$-stage then $\bar{x}$ is greater than any number appeared in the construction before stage $s$. In particular $\bar{x} > x$, contradicting the premise.

Suppose that $s$ is a $D(k',\sigma,\tau)$-stage. Then $\tau$ is the $k'$-deputy of $\sigma$ and $\zeta$ is the $(k'-1)$-deputy of some $\nu \supset \tau$. Moreover, $k'-1 > k$ and $c_k$ is a replica of some $c_k' = c(k,\zeta',\vec{x},\bar{x},i)$ such that $\tau \subset \zeta' \subset \nu$. As $c_{k-1}$ is imposed at stage $s_0 < s$, $c_{k-1}$ is active at $\sigma$ by (2) of Lemma \ref{lem:replication}. By (3) of Lemma \ref{lem:replication}, there exists $c_{k-1}' = c(k-1,\eta',\vec{x}\<x\>,\bar{y},j)$ active at $\tau$. But then $\zeta'$ is shorter than $\zeta$ and witnesses the failure of clause (1). So we have a desired contradiction.

The case that $s$ is a $D(\rho)$-stage is similar.

(2) Assume that $\zeta$ is the shortest witnessing the failure of clause (2). Let $c_0$ denote $c(0,\zeta,\vec{x},\bar{x},i)$. Let $s$ be the stage when $c_0$ is imposed. By the premise of (2), $\Gamma(\eta)$ is explicitly defined at some stage $s_0$. By the construction, $\eta$ is longer than any $\xi$ appeared before stage $s_0$. As we assume that $\zeta \supset \eta$, $s_0 \leq s$.

If $s$ is a $Q$-stage then $s_0 < s$, since a $Q$-module cannot extend $\Gamma$ when it imposes a constraint. So $\bar{x}$ is greater than any number appeared before stage $s$, in particular $\bar{x} > x$, contradicting the premise.

Suppose that $s$ is a $D(k',\sigma,\tau)$-stage. Then $\tau$ is the $k'$-deputy of $\sigma$ and $\zeta$ being the $(k'-1)$-deputy of some $\nu \supset \tau$ is of the form $\sigma\<1^l0\>$. Moreover, $c_0$ is a replica of some $c_0' = c(0,\zeta',-)$ such that $\tau \subset \zeta' \subset \nu$. By the construction, $|\zeta| > |\zeta'|$. As $\sigma \subset \rho \subset \zeta$ implies that $\Gamma(\rho) = \Gamma(\sigma)$ by (C1), $\eta \subseteq \sigma$. Since $\Gamma(\tau) \supset \Gamma(\sigma) \supseteq \Gamma(\eta)$, $\Delta(\Gamma(\tau); \vec{x}, x) \downarrow$. Let $\eta'$ be the shortest initial segment of $\tau$ such that $\Delta(\Gamma(\eta'); \vec{x}, x) \downarrow$. Then $\zeta'$ witnesses the failure of clause (2) as well, contradicting $\zeta$ being the shortest.

By a similar argument, it is impossible that $s$ is a $D(\rho)$-stage.
\end{proof}

Below we show that every active constraint is obeyed.

\begin{lemma}\label{lem:Q-constraints-cons-2}
Suppose that $c(k,\xi,\vec{x},\bar{x},i)$ is active, $\xi$ is comparable with $\mu$ and $x > \bar{x}$.
\begin{enumerate}
    \item If $k > 0$ and $c(k-1,\mu,\vec{x} \<x\>,\bar{y},j)$ is also active then $j = i$.
    \item If $k = 0$ and $\Delta(\Gamma(\mu); \vec{x},x) \downarrow = j$ then $j = i$.
\end{enumerate}
\end{lemma}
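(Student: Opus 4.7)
My plan is to prove both parts simultaneously by induction on stages, following the template of the proof of Lemma~\ref{lem:Q-constraints-cons-1}. The first reduction would be to apply Lemma~\ref{lem:Q-thresholds}: part~(1) of that lemma gives $\mu\supseteq\xi$ directly in case~(1), and in case~(2) I would apply part~(2) of Lemma~\ref{lem:Q-thresholds} to the shortest initial segment $\eta\subseteq\mu$ on which $\Delta(\Gamma(\eta);\vec{x},x)$ converges to conclude $\mu\supseteq\eta\supseteq\xi$. Assuming the lemma holds at the end of stage $s-1$, a first failure at stage $s$ means that stage $s$ either imposes $c_{k-1}=c(k-1,\mu,\vec{x}\<x\>,\bar{y},j)$, imposes $c_k=c(k,\xi,\vec{x},\bar{x},i)$, or (for case~(2)) defines $\Delta(\Gamma(\mu);\vec{x},x)=j$.

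The main case is when stage $s$ is a $Q$-stage imposing $c_{k-1}$ (or defining $\Delta$) at some $\mu=\sigma_0\<1^{l_0}0\>$ with fresh $l_0$. Here I would argue that $c_k$ is active at $\sigma_0$: since $\xi\subseteq\mu$, the possibilities are $\xi\subseteq\sigma_0$, $\xi=\sigma_0\<1^{l'}\>$ for some $0<l'\leq l_0$, or $\xi=\mu$. Code (C4) reduces the middle case to the first, and freshness of $l_0$ together with the fact that a $Q$-stage imposes only one constraint rules out the third (a layer-$k$ constraint at $\mu$ would require a strictly later stage). By Lemma~\ref{lem:Q-constraints-cons-1}, $c_k$ is then the unique layer-$k$ constraint with parameters $\vec{x}$ active at $\sigma_0$, so the $Q$-module's construction sets $j=i$ (or $\Delta=i$), contradicting the assumed failure. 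When $s$ is instead a $D$-stage imposing $c_{k-1}$ at $\mu=d(\nu)$ as a replica of some $c_{k-1}'$ active at $\nu$, I would invoke Lemma~\ref{lem:replication}(3) (the replicated layer $k-1$ is strictly below the deputy layer, so $k$ sits at or below it) to transfer the hypothesized $c_k$ to a matching active constraint $c_k'$ at $\nu$; the induction hypothesis applied to the pair $(c_k',c_{k-1}')$ at $\nu$ then yields $j=i$. The symmetric situation, where $c_k$ is newly imposed at stage $s$, is handled by tracing back through the higher-layer constraint or replication source that determined its value.

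The hard part will be two interlocking bookkeeping issues. First, when the layer $k$ equals the deputy layer in a $D$-stage, Lemma~\ref{lem:replication}(3) does not directly transfer $c_k$ from $\mu$ back to $\nu$; in that scenario I would argue that $c_k$ at $\mu$ must predate the current $D$-stage, because layer-$k$ constraints are not replicated when the $D$-module is replicating layer $k-1$, and descend to an earlier stage governed by the induction hypothesis. Second, when the $Q$-module at $\sigma_0$ sets $j=\sigma_0(x_0)$ by default (no higher constraint at $\sigma_0$) while the value $i$ at $\xi$ was set by a $Q$-module at a possibly different source $\sigma_1$, I would show that $\sigma_0$ and $\sigma_1$ are comparable (both sitting inside $\mu$) and share the relevant bit, again appealing to (C4) and Lemma~\ref{lem:Q-constraints-cons-1} to rule out a higher constraint existing at $\sigma_1$ but not at $\sigma_0$.
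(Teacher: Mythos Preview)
Your overall inductive framework matches the paper's, and the $Q$-stage case is handled correctly. However, the $D$-stage case has a genuine gap precisely in the edge case you yourself flag.

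At a $D(k',\sigma,\tau)$-stage the module defines $\mu = d(\nu)$ as the $(k'-1)$-deputy of some $\nu \supset \tau$ (with $\mu \supset \sigma$) and replicates only constraints of layer $j < k'-1$. So if $c_{k-1}$ is a replica you get $k-1 < k'-1$, i.e.\ $k \le k'-1$. Your plan is to transfer $c_k$ from $\mu$ to $\nu$ via Lemma~\ref{lem:replication}(3) applied to the $(k'-1)$-deputy pair $(\nu,\mu)$, but that clause requires $k < k'-1$. In the boundary case $k = k'-1$ you correctly observe that $c_k$ cannot be a replica, hence $\xi \subseteq \sigma$; but ``descend to an earlier stage'' does not close the argument. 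The replication source $c_{k-1}'$ sits at some $\eta$ with $\tau \subset \eta \subset \nu$, and since $\sigma$ and $\tau$ are incomparable, $\xi$ and $\eta$ are incomparable too, so the induction hypothesis does not apply to the pair $(c_k, c_{k-1}')$. The same obstruction appears in part~(2) when $k' = 1$: the $0$-deputy carries no replicas, $\xi \subseteq \sigma$, and the value $j$ comes from $\Delta(\Gamma(\nu))$ on the $\tau$-side.

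The missing move, which the paper uses, is to transfer through the \emph{other} deputy pair in the picture: since $\tau$ is the $k'$-deputy of $\sigma$ and $k < k'$, Lemma~\ref{lem:replication}(3) applied to $(\sigma,\tau)$ produces a copy $c_k'$ of $c_k$ active at $\tau$, hence at $\nu$. Now $c_k'$ and $c_{k-1}'$ (respectively $\Delta(\Gamma(\nu))$) live on the same side, are comparable, and are both present before stage $s$, so the induction hypothesis applies directly. Your proposal only ever invokes the freshly created pair $(\nu,\mu)$, never the ambient pair $(\sigma,\tau)$, and that is where the argument breaks.

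Two smaller remarks. The ``symmetric situation'' (with $c_k$ newly imposed strictly after $c_{k-1}$) cannot occur: $\mu \supseteq \xi$ from Lemma~\ref{lem:Q-thresholds} together with freshness of lengths forces $c_{k-1}$ to be imposed no earlier than $c_k$; equality of stages happens only when both are simultaneous replicas at a $D$-stage, which is already inside your $D$-stage case. And your second ``hard part'' is a non-issue: once $c_k$ is shown active at $\sigma_0$ with $x > \bar{x}$, the $Q(k,\sigma_0,\vec{x}\<x\>)$-module must find it (Lemma~\ref{lem:Q-constraints-cons-1} gives uniqueness) and obey it, so the default branch $j = \sigma_0(x_0)$ is never taken.
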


\begin{proof}
Assume that both (1) and (2) hold before stage $s$. Fix parameters as in the premises. Let $c_k = c(k,\xi,\vec{x},\bar{x},i)$. By Lemma \ref{lem:Q-thresholds}, we may assume that $\mu \supseteq \xi$.

(1) Assume that $c_k$ is imposed at stage $s_0$ and $c_{k-1} = c(k-1,\mu,\<\vec{x} x\>,\bar{y},j)$ at stage $s$. Then $s \geq s_0$ and $s$ is a $Q$- or $D$-stage.

If $s$ is a $Q$-stage then $\xi \subset \mu$ as at each $Q$-stage at most one constraint is imposed. If $c_{k-1}$ is imposed by a $Q(k,\sigma,-)$-module then $\mu$ is of the form $\sigma\<1^l0\>$ and $c_k$ is active at $\sigma$ by (C4). So $j = i$ by the $Q(k,\sigma,-)$-module and Lemma \ref{lem:Q-constraints-cons-1}.

Suppose that $s$ is a $D(k',\sigma,\tau)$-stage. Then $\tau$ is the $k'$-deputy of $\sigma$, $\sigma \subset \mu$ and $\mu$ is the $(k'-1)$-deputy of some $\nu \supset \tau$. Moreover, $k'-1 > k-1$ and $c_{k-1}$ is a replica of some $c'_{k-1} = c(k-1,\eta,-)$ such that $\tau \subset \eta \subset \nu$. If $\xi \subset \mu$, $\xi \subseteq \sigma$. By (3) of Lemma \ref{lem:replication}, there exists $c_k' = c(k,\zeta,\vec{x},\bar{x},i)$ active at $\tau$. If $\xi = \mu$ then $c_k$ is also a replica of some $c_k'' = c(k,\zeta',-)$ active at $\nu$. In either case, $j = i$ by the induction hypothesis.

The case where $s$ is a $D(\rho)$-stage is similar.

(2) We may assume that $\Delta(\Gamma(\mu \uh (|\mu| - 1)); \vec{x},x) \uparrow$. So $\Gamma(\mu)$ is explicitly defined in the construction at some stage $s$.

If $s$ is a $P$-stage then $\Delta(\Gamma(\mu); \vec{x},x)$ diverges too as $\Gamma(\mu)$ is of a fresh length and $P$-modules never change $\Delta$.

If $s$ a $Q(0,\sigma,-)$-stage then $\mu$ is of the form $\sigma\<1^l0\>$ with some fresh $l$. As $Q(0,-)$-modules never impose constraints, $\xi \subset \mu$. By (C4), $c_k$ is active at $\sigma$. So by Lemma \ref{lem:Q-constraints-cons-1} and the $Q(0,-)$-module, $j = i$.

Suppose that $s$ is a $D(k',\sigma,\tau)$-stage. Then $k' > 0 = k$ and $\mu = d(\nu) = \nu\<1^l0\>$ for some $l$ and $\nu \supset \tau$. So $\Gamma(\mu) \supset \Gamma(\nu)$. As the $D(k',\sigma,\tau)$-module does not extend $\Delta$ explicitly, $\Delta(\Gamma(\nu); \vec{x},x) \downarrow = j$ as well. If $c_k$ is active at $\sigma$, then there exists $c_k' = c(k,\zeta,\vec{x},\bar{x},i)$ active at $\tau$ by (3) of Lemma \ref{lem:replication}. If $c_k$ is not active at $\sigma$ then $c_k$ is a replica of some $c_k''$ active at $\nu$. In either case, $j = i$ by the induction hypothesis applying to $c_k'$ and $\nu$ or $c_k''$ and $\nu$.

The case where $s$ is a $D(\rho)$-stage is similar.
\end{proof}

Now we can prove the satisfaction of the $Q$'s for $n = 1$.

\begin{lemma}\label{lem:Q-top-1}
If $n = 1$ and $x < |\sigma|$ then $Q(n-1,\sigma,x)$ is satisfied. Hence
$$
    (\forall x)(x \in G \leftrightarrow \Delta(\Gamma(G); x) \downarrow = 1)
$$
for every $1$-generic $G$.
\end{lemma}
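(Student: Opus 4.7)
The plan is to verify $Q(0,\sigma,x)$ for every $\sigma$ with $x<|\sigma|$ and then derive the biconditional for $1$-generic $G$. The central preliminary observation is that for $n=1$ the only module ever extending $\Delta$ is the $Q(n-1,\sigma',x)$-module of case 7a, which runs only when $\sigma'(x)=1$ and always assigns the value $1$; in particular, whenever $\Delta(\Gamma(\tau);x)\downarrow$ that value must be $1$. Likewise every explicit extension of $\Gamma$ in the construction appends a block of the form $\<1^{l^*}0\>$ with $l^*$ fresh, so the value $\Gamma(\tau)$ decomposes into such blocks whose parameters are pairwise distinct.

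For the forward direction of $Q(0,\sigma,x)$ with $\sigma(x)=1$, I would examine the first $Q(0,\sigma,x)$-stage: either no $\rho$ comparable with $\sigma$ carries a value yet and the module itself defines $\Delta(\Gamma(\sigma\<1^l0\>);x)=1$ so that $\tau=\sigma\<1^l0\>$ is a witness, or some comparable $\rho$ already carries the value $1$, in which case $\tau=\rho$ works (if $\rho\supseteq\sigma$) or $\tau=\sigma$ works (if $\rho\subset\sigma$, using continuity of $\Gamma$ from Lemma \ref{lem:Gamma}). For the converse, any $\tau\supseteq\sigma$ with $\Delta(\Gamma(\tau);x)\downarrow=1$ traces the value back to a $Q(0,\sigma',x)$-action with $\sigma'(x)=1$, $|\sigma'|>x$, which set $\Delta(\Gamma(\sigma'\<1^l0\>);x)=1$ for an $l$ fresh at that stage. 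The terminal block $\<1^l0\>$ of $\Gamma(\sigma')\<1^l0\>$ must match one of the blocks in the decomposition of $\Gamma(\tau)$, and by freshness of $l$ that block could only have been produced by the very same $Q(0,\sigma',x)$-action, forcing $\sigma'\<1^l0\>\subseteq\tau$. Hence $\sigma(x)=\tau(x)=\sigma'(x)=1$. This freshness/uniqueness step is the main obstacle and deserves the most careful writing.

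Once the requirements are in hand, the biconditional for $1$-generic $G$ follows by a short genericity argument. If $x\in G$, then for every initial segment $\tau\subset G$ with $|\tau|>x$ we have $\tau(x)=1$, and $Q(0,\tau,x)$ supplies an extension $\tau'\supseteq\tau$ with $\Delta(\Gamma(\tau');x)\downarrow=1$; thus the recursively enumerable set $\{\tau:\Delta(\Gamma(\tau);x)\downarrow\}$ is dense along $G$, and $1$-genericity places some initial segment of $G$ inside it, giving $\Delta(\Gamma(G);x)\downarrow=1$. Conversely, $\Delta(\Gamma(G);x)\downarrow=1$ yields some $\tau\subset G$ with $|\tau|>x$ and $\Delta(\Gamma(\tau);x)\downarrow=1$; then $\tau$ itself witnesses the right-hand side of $Q(0,\tau,x)$, whence $\tau(x)=1$ and $x\in G$.
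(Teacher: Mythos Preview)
Your forward direction and the final genericity deduction are fine and match the paper's reasoning. The gap is in the backward direction, precisely at the ``freshness/uniqueness step'' you flag as the main obstacle.

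You argue: the value $\Delta(\Gamma(\tau);x)=1$ traces back to a $Q(0,\sigma',x)$-action with $\sigma'(x)=1$ that wrote the block $\<1^l0\>$ at the end of $\Gamma(\sigma')\<1^l0\>$; by freshness of $l$ this block in the decomposition of $\Gamma(\tau)$ can only have been produced by that action, ``forcing $\sigma'\<1^l0\>\subseteq\tau$''. The last inference fails. Freshness guarantees that the block $\<1^l0\>$ was \emph{first} written into $\Gamma(\sigma'\<1^l0\>)$, but it does not guarantee that every $\tau$ with $\Gamma(\tau)\supseteq\Gamma(\sigma'\<1^l0\>)$ extends $\sigma'\<1^l0\>$. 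For $n=1$ the $D(\rho)$-module is still active: it picks $\alpha\supset\rho\<0\>$, sets $d(\alpha)=\rho\<1^{|\alpha|}0\>$, and defines $\Gamma(d(\alpha))=\Gamma(\alpha)\<1^{|\alpha|}0\>$. If $\alpha\supseteq\sigma'\<1^l0\>$ then $\Gamma(d(\alpha))\supset\Gamma(\sigma'\<1^l0\>)$, so the block $\<1^l0\>$ sits inside $\Gamma(d(\alpha))$ although $d(\alpha)$ and $\sigma'\<1^l0\>$ are incomparable. Thus $\Gamma$ is not tree-preserving in the sense your argument needs, and $\sigma'\<1^l0\>\subseteq\tau$ does not follow.

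The paper closes this hole by a minimal-counterexample argument: take the shortest $\sigma$ with $\sigma(x)=0$ and $\Delta(\Gamma(\sigma);x)\downarrow=1$, and analyze the stage at which $\Gamma(\sigma)$ was explicitly defined. The $P$- and $Q$-stage cases are immediate; the crucial case is a $D(\rho)$-stage, where $\sigma=\rho\<1^{|\tau'|}0\>$ is the deputy of some $\tau'\supset\rho\<0\>$. Since the $D$-module does not extend $\Delta$, one gets $\Delta(\Gamma(\tau');x)\downarrow=1$; and from $\sigma(x)=0$ and the shape $\sigma=\rho\<1^{|\tau'|}0\>$ one deduces $x<|\rho|$, hence $\tau'(x)=\rho(x)=0$, contradicting minimality. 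Your argument could be repaired along similar lines (or by invoking (C5) to show that passage through a deputy preserves $\tau(x)=1$), but as written it overlooks the deputy mechanism entirely.
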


\begin{proof}
Suppose that $x < |\sigma|$ and $\sigma(x) = 1$. By the $Q(0,\sigma,x)$-module there exists $\tau \supset \sigma$ with $\Delta(\Gamma(\tau); x) \downarrow = 1$.

For the other direction, assume that $\sigma$ is the shortest such that
\begin{equation}\label{eq:Q-top-1-contradiction}
    (\exists x)(\sigma(x) = 0 \wedge \Delta(\Gamma(\sigma); x) \downarrow = 1).
\end{equation}
Fix an $x$ satisfying the matrix of \eqref{eq:Q-top-1-contradiction}. We may assume that $\Delta(\Gamma(\sigma \uh (|\sigma| - 1)); x) \uparrow$. So $\Gamma(\sigma)$ is explicitly defined in the construction at some stage $s$. If $s$ is a $P$-stage then $\sigma = \rho\<1^l0\>$ for some fresh $l$ and no new computation of $\Delta$ is enumerated at stage $s$. Thus \eqref{eq:Q-top-1-contradiction} cannot hold by the choice of $\sigma$. If $s$ is a $Q(0,-,x)$-stage then by the construction nothing happens at stage $s$. Suppose that $s$ is a $D(\rho)$-stage. Then $\sigma$ is the deputy of some $\tau \supseteq \rho\<0\>$, $\sigma = \rho\<1^{|\tau|}0\>$ and $\Delta(\Gamma(\sigma)) = \Delta(\Gamma(\tau))$. So $\Delta(\Gamma(\sigma); |\sigma|-1)$ is undefined and $x < |\sigma|-1$. As $\sigma(x) = 0$, $x < |\rho|$. So $\tau(x) = \rho(x) = 0$ and $\Delta(\Gamma(\tau); x) = 1$, contradicting the choice of $\sigma$. Since $n = 1$, there exist no $D(k,-)$-stages with $0< k < n$.

So $Q(0,\sigma,x)$ is satisfied. The other half follows easily from the first half.
\end{proof}

For $n > 1$, the satisfaction of the $Q$'s follows from the next two lemmata.

\begin{lemma}\label{lem:Q-forcing}
Suppose that $n-1 > k$. Then every $Q(k,-)$ is satisfied, and if $c(k,\sigma,\vec{x},\bar{x},i)$ is active then $\Delta_{k+1}(\Gamma(G); \vec{x}) = i$ for each $1$-generic $G \supset \sigma$ and thus
$$
    \sigma \Vdash \Delta_{k+1}(\Gamma(G); \vec{x}) \downarrow = i.
$$
\end{lemma}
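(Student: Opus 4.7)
The plan is to prove both halves of the lemma by a single induction on $k$, for $0 \le k \le n-2$, handling the satisfaction of $Q(k,-)$ and the forcing claim in tandem.

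\emph{Base case} ($k = 0$). A $Q(0,\sigma,\vec{x})$-module, when it acts, either finds a $\tau$ comparable with $\sigma$ at which $\Delta(\Gamma(\tau);\vec{x})$ is already defined, or itself defines $\Delta(\Gamma(\sigma\<1^l0\>);\vec{x})\downarrow$; by the monotonicity of $\Delta$ together with (C2), either outcome yields a proper extension $\tau \supset \sigma$ with $\Delta(\Gamma(\tau);\vec{x})\downarrow$, so $Q(0,\sigma,\vec{x})$ is met. For the forcing claim, fix an active $c(0,\sigma,\vec{x},\bar{x},i)$ and a $1$-generic $G \supset \sigma$. For each $x > \bar{x}$, the r.e.\ set
$$
D_x = \{\rho : (\exists \rho' \subseteq \rho)\, \Delta(\Gamma(\rho');\vec{x},x)\downarrow\}
$$
is dense below $\sigma$: for any sufficiently long $\mu \supseteq \sigma$, the $Q(0,\mu,\<\vec{x},x\>)$-module either already sees such a $\rho'$ comparable with $\mu$ or explicitly produces one at $\mu\<1^l0\>$. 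By $1$-genericity, some initial segment of $G$ lies in $D_x$, so $\Delta(\Gamma(G);\vec{x},x)\downarrow = j$ for some $j$; Lemma \ref{lem:Q-constraints-cons-2}(2), applied to the shortest initial segment $\rho'$ of $G$ witnessing convergence, forces $j = i$. Since this holds for every $x > \bar{x}$, $\Delta_1(\Gamma(G);\vec{x}) = \lim_x \Delta(\Gamma(G);\vec{x},x) = i$.

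\emph{Inductive step.} Fix $0 < k < n-1$ and assume the claim for $k-1$. By inspecting the three branches of the $Q(k,\sigma,\vec{x})$-module, after it acts there is always an active $c(k-1,\eta,\vec{x},-,j)$ with $\eta$ comparable with $\sigma$; the induction hypothesis yields $\eta \Vdash \Delta_k(\Gamma(G);\vec{x})\downarrow = j$, and any proper extension of $\sigma$ that also extends $\eta$ then witnesses $Q(k,\sigma,\vec{x})$. For the forcing claim, fix an active $c(k,\sigma,\vec{x},\bar{x},i)$ and a $1$-generic $G \supset \sigma$. For each $x > \bar{x}$, consider the r.e.\ set
$$
E_x = \{\rho : (\exists \rho' \subseteq \rho)(\exists j)\, c(k-1,\rho',\<\vec{x},x\>,-,j) \text{ is active}\}.
$$
It is dense below $\sigma$, because each $Q(k,\mu,\<\vec{x},x\>)$-module for $\mu \supseteq \sigma$ either already observes a comparable constraint of the required form or imposes a fresh one at $\mu\<1^l0\>$. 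By $1$-genericity, some $\rho' \subset G$ carries an active $c(k-1,\rho',\<\vec{x},x\>,-,j)$; the induction hypothesis yields $\Delta_k(\Gamma(G);\vec{x},x) = j$, and Lemma \ref{lem:Q-constraints-cons-2}(1) applied to $c(k,\sigma,-)$ and the constraint at $\rho'$ (both $\sigma$ and $\rho'$ are initial segments of $G$, hence comparable) forces $j = i$. Hence $\Delta_{k+1}(\Gamma(G);\vec{x}) = \lim_x \Delta_k(\Gamma(G);\vec{x},x) = i$.

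The main obstacle is verifying the density of $E_x$ below $\sigma$ in the awkward case where a $Q(k,\mu,\<\vec{x},x\>)$-module silently does nothing because some $c(k-1,\rho',\<\vec{x},x\>,-,j)$ with $\rho' \subseteq \sigma$ is already active, so no new constraint for this tuple is recorded above $\sigma$. This case is actually benign: such a $\rho'$ is itself an initial segment of every $G \supset \sigma$, so the needed constraint sits along $G$ from the start and the induction hypothesis applies directly through $\rho'$. A careful case split along these lines, combined with the observation that constraints produced after $\sigma$ has been fixed live on strings of fresh length and hence extend $\sigma$, is what makes the density argument rigorous.
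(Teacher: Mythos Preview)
Your proof is correct and follows essentially the same approach as the paper's: an induction on $k$ that first establishes density of the relevant sets (your $D_x$ and $E_x$), then uses $1$-genericity to find an initial segment of $G$ in each, and finally invokes Lemma~\ref{lem:Q-constraints-cons-2} together with the induction hypothesis to pin down the value. Your treatment is a bit more explicit than the paper's (in particular your discussion of the ``awkward case'' where the needed constraint already lives below $\sigma$), but the structure and the key steps are the same.
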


\begin{proof}
We prove the lemma by induction on $k$.

Suppose that $k = 0$. By the construction, for every $\vec{x}\<x\>$ of length $n$ the following set is recursive and dense
$$
    \{\sigma: \Delta(\Gamma(\sigma); \vec{x},x) \downarrow\}.
$$
So every $Q(0,\sigma,\vec{x}\<x\>)$ is satisfied. Furthermore, assume that $c(0,\sigma,\vec{x},\bar{x},i)$ is active and $\Delta(\Gamma(\mu); \vec{x}, x) \downarrow = j$ where $x > \bar{x}$ and $\mu$ is comparable with $\sigma$. By (2) of Lemma \ref{lem:Q-constraints-cons-2}, $j = i$. Hence
$$
    \sigma \Vdash \Delta_1(\Gamma(G); \vec{x}) \downarrow = i.
$$

Suppose that $k > 0$. By the construction, for every $\vec{x}\<x\>$ of length $n-k$ the following set is recursive and dense
$$
    \{\tau: \text{ there exists an active } c(k-1,\tau,\vec{x}\<x\>,\bar{x},j)\}.
$$
By the induction hypothesis,
$$
    \tau \Vdash \Delta_k(\Gamma(G); \vec{x},x) \downarrow
$$
for densely many $\tau$. So the $Q(k,-)$'s are satisfied. Furthermore, assume that $c(k,\sigma,\vec{x},\bar{x},i)$ is active and $G$ is a $1$-generic extending $\sigma$. Then for every $x > \bar{x}$ there exists $\mu \subset G$ such that $c(k-1,\mu,\vec{x}\<x\>,\bar{y},j)$ is active. By (1) of Lemma \ref{lem:Q-constraints-cons-2}, $j = i$. By the induction hypothesis, $\Delta_k(\Gamma(G); \vec{x}, x) \downarrow = i$ and $\Delta_{k+1}(\Gamma(G); \vec{x}, x) = \lim_x \Delta_k(\Gamma(G); \vec{x}, x) = i$. Hence $\sigma \Vdash \Delta_{k+1}(\Gamma(G); \vec{x}) \downarrow = i$.
\end{proof}

\begin{lemma}\label{lem:Q-top-n}
If $n > 1$ and $x$ and $\sigma$ are such that $|\sigma| > x$ then
$$
    \sigma(x) = 1 \to \sigma \Vdash \Delta_{n-1}(\Gamma(G); x) \downarrow = 1
$$
and
$$
    \sigma(x) = 0 \to \sigma \Vdash \neg (\Delta_{n-1}(\Gamma(G); x) \downarrow = 1).
$$
Hence for $n > 1$ every $Q(n-1,\sigma,x)$ is satisfied.
\end{lemma}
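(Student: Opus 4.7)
The plan is to split into the cases $\sigma(x) = 1$ and $\sigma(x) = 0$, combine Lemma~\ref{lem:Q-forcing} with density arguments driven by the $Q(n-1,-,x)$-module, and rely on one structural observation about level-$(n-2)$ $Q$-constraints. The key auxiliary claim is that every active $c(n-2,\eta,x,\bar{x},i)$ with $|\eta| > x$ satisfies $i = 1$ and $\eta(x) = 1$. Originals come only from the $Q(n-1,\sigma',x)$-module in its $\sigma'(x) = 1$ branch, producing $\eta = \sigma'\<1^l0\>$ with $\eta(x) = 1$ and $i = 1$. Replicas of such a constraint cannot be produced by any $D(k,\sigma,\tau)$-module (which would require $n-2 < k-1$, incompatible with $k \leq n-1$), so the only source of replicas is a $D(\rho')$-module replicating some $c(n-2,\zeta_0,x,\bar{x},1)$ with $\rho' \subset \zeta_0 \subset \sigma$; the replica sits at $d(\sigma) = \rho'\<1^{|\sigma|}0\>$, and a short case split on $x < |\rho'|$ versus $|\rho'| < x < |\sigma|$ (the equality $x = |\rho'|$ being ruled out by $\zeta_0(|\rho'|) = 0 \neq 1 = \zeta_0(x)$, thanks to $\sigma \supseteq \rho'\<0\>$) yields $d(\sigma)(x) = 1$, while the value $i = 1$ is preserved by the replication rule.

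For $\sigma(x) = 1$, I would show that the r.e.\ set
\[
  D_1 \;=\; \bigl\{\rho \supseteq \sigma : \text{some active } c(n-2,\zeta,x,\bar{x},1) \text{ has } \zeta \subseteq \rho \bigr\}
\]
is dense below $\sigma$. Given any $\rho \supseteq \sigma$, examine the $Q(n-1,\rho,x)$-stage: if some $c(n-2,\zeta,x,\bar{x},i)$ is already active and comparable with $\rho$ then by the auxiliary claim $i = 1$, and either $\zeta \subseteq \rho$ (so $\rho \in D_1$) or $\rho \subset \zeta$ (so $\zeta \in D_1$); otherwise the module imposes $c(n-2,\rho\<1^l0\>,x,l,1)$, placing $\rho\<1^l0\> \in D_1$. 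By $1$-genericity, any $n$-generic $G \supset \sigma$ has some $\zeta \subset G$ with an active $c(n-2,\zeta,x,\bar{x},1)$, and Lemma~\ref{lem:Q-forcing} applied with $k = n-2$ gives $\Delta_{n-1}(\Gamma(G);x) = 1$; hence $\sigma \Vdash \Delta_{n-1}(\Gamma(G);x) \downarrow = 1$.

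For $\sigma(x) = 0$, every $\rho \supseteq \sigma$ has $\rho(x) = 0$, so the auxiliary claim forbids any active $c(n-2,\eta,x,-)$ from being comparable with $\rho$. Hence the $Q(n-1,\rho,x)$-module always enters its $\sigma(x) = 0$ branch, either (for $n > 2$) imposing $c(n-3,\rho\<1^l0\>,\<x\,l\>,l,0)$ with fresh $l$, or (for $n = 2$) directly defining $\Delta(\Gamma(\rho\<1^l0\>);x,l) = 0$ with fresh $l$. Since $l$ is fresh at each such stage, for every $l_0$ the r.e.\ collection of $\rho \supseteq \sigma$ witnessing such an outcome for some $l \geq l_0$ is dense below $\sigma$; by $1$-genericity, $G$ meets it for each $l_0$, producing infinitely many $l$ with $\Delta_{n-2}(\Gamma(G);x,l) = 0$ -- by Lemma~\ref{lem:Q-forcing} with $k = n-3$ when $n > 2$, or by the direct definition when $n = 2$. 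This forbids $\lim_l \Delta_{n-2}(\Gamma(G);x,l) = 1$, so $\sigma \Vdash \neg(\Delta_{n-1}(\Gamma(G);x) \downarrow = 1)$. The two forcing statements together give satisfaction of $Q(n-1,\sigma,x)$: if $\sigma(x) = 1$ then $\tau = \sigma$ witnesses the existential, while if $\sigma(x) = 0$ then any $\tau \supseteq \sigma$ inherits $\tau(x) = 0$ and would force the negation, ruling out the existential.

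The main obstacle will be the auxiliary claim. Although the intuition is clear -- every level-$(n-2)$ constraint about the bit $x$ should live on strings where that bit is $1$ -- one must carefully trace the $D(\rho)$-module replications through $d(\sigma) = \rho\<1^{|\sigma|}0\>$, exploiting that $\zeta_0 \subset \sigma \supseteq \rho\<0\>$ pins down $\zeta_0(|\rho|) = 0$ and that the $1$-padding of $d(\sigma)$ between positions $|\rho|$ and $|d(\sigma)|-1$ keeps the value $1$ at position $x$. Once this bookkeeping is verified, all remaining work is a standard translation between dense r.e.\ sets of strings, $1$-genericity, and the forcing already delivered by Lemma~\ref{lem:Q-forcing}.
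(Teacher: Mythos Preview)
Your proposal is correct and follows essentially the same approach as the paper. Your auxiliary claim is the paper's Claim (stated contrapositively and slightly more globally), proved by the same case analysis on which module imposed the constraint; the only cosmetic difference is that the paper invokes (C5) and Lemma~\ref{lem:replication}(3) for the $D(\rho')$-replica case, whereas you unpack the explicit form $d(\sigma)=\rho'\<1^{|\sigma|}0\>$ directly. The density arguments in both the $\sigma(x)=1$ and $\sigma(x)=0$ cases, and the appeals to Lemma~\ref{lem:Q-forcing}, match the paper's proof.
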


\begin{proof}
Suppose that $\sigma(x) = 1$. By the $Q(n-1,-,x)$-modules, the following set is recursive and dense below $\sigma$:
$$
    \{\mu: \text{some } c(n-2,\zeta,x,\bar{x},i) \text{ is active at } \mu\}.
$$
Since no module can ever impose any $c(n-2,\xi,y,\bar{y},0)$, the dense set above equals to the set below
$$
    \{\mu: \text{some } c(n-2,\zeta,x,\bar{x},1) \text{ is active at } \mu\}.
$$
By Lemma \ref{lem:Q-forcing},
$$
    \sigma \Vdash \Delta_{n-1}(\Gamma(G); x) \downarrow = 1
$$

On the other hand, suppose that $\sigma(x) = 0$.

\begin{claim}\label{clm:Q-top-n}
There is no active $c(n-2,\zeta,x,\bar{x},i)$ such that $\zeta$ and $\sigma$ are comparable.
\end{claim}

\begin{proof}[Proof of Claim \ref{clm:Q-top-n}]
For a contradiction, assume that there exists such an active $c(n-2,\zeta,x,\bar{x},i)$. Since the construction never produces a constraint like $c(n-2,-,0)$, $i = 1$. By the construction, $|\zeta| > x$. So we may assume that $\zeta = \sigma$ and $\sigma$ is the shortest with such a constraint. Clearly such a $Q$-constraint cannot be imposed by a $Q$-module. Neither can it be imposed by a $D(k,-)$-module with $k \leq n-1$, since such a module can only replicate $c(j,-)$ with $j < k-1 \leq n-2$. Suppose that a $D(\rho)$-module imposes $c$. Then $\sigma \supset \rho\<1\>$ and $\sigma$ is the deputy of some $\tau \supset \rho\<0\>$. By (3) of Lemma \ref{lem:replication}, there exists some $c(n-2,\eta,x,\bar{x},1)$ active at $\tau$. As $\tau(y) = 1$ implies $\sigma(y) = 1$ for all $y$, $\sigma(x) = 0$ implies $\eta(x) = \tau(x) = 0$ too, contradicting the choice of $\sigma$.
\end{proof}

Let $\mu \supset \sigma$ be arbitrary. By Claim \ref{clm:Q-top-n}, no $c(n-2,\zeta,x,\bar{x},i)$ is active at $\mu$. Note that there are at most finitely many $Q$-constraints active at $\mu$. By the $Q(n-1,\mu,x)$-module, there exist $x_1$ and $\zeta \supset \mu$, such that either $n = 2$ and $\Delta(\Gamma(\zeta); x,x_1) \downarrow = 0$ or $n > 2$ and there exists an active constraint $c(n-3,\zeta,\<x x_1\>,\bar{x},0)$. By Lemma \ref{lem:Q-forcing}, either case implies that for each $\bar{x}$ the following set is dense below $\sigma$
$$
    \{\tau: (\exists x_1 > \bar{x}) \tau \Vdash \Delta_{n-2}(\Gamma(G); x, x_1) \downarrow = 0\}.
$$
Hence
$$
    \sigma \Vdash \neg (\Delta_{n-1}(\Gamma(G); x) \downarrow = 1).
$$

It follows that $Q(n-1,\sigma,x)$ is satisfied.
\end{proof}

By Lemma \ref{lem:Q-forcing} and that the following set is dense and recursive
$$
    \{\sigma: \text{ some } c(n-k-1,\sigma,\vec{x},x,i) \text{ is active}\},
$$
$\Delta_{n-k}(\Gamma(G))$ is $G$-recursive for $n \geq k > 1$ and every $1$-generic $G$. Then by Lemma \ref{lem:Q-top-n}, every $1$-generic $G$ is $\Sigma^0_{k}$ in $\Delta_{n-k}(\Gamma(G))$.

Finally, we prove the satisfaction of the $D$'s and also (A2).

\begin{lemma}\label{lem:D-satisfaction-1}
If $\tau$ is defined as the $k$-deputy of $\sigma$ then $D(k,\sigma,\tau)$ is satisfied.
\end{lemma}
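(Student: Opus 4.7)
My plan is induction on $k$. The base case $k = 0$ is immediate from Lemma \ref{lem:d-Gamma}: if $\tau = d(\sigma)$ then $\Gamma(\tau) \supset \Gamma(\sigma)$, which is exactly $D(0,\sigma,\tau)$.

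For the inductive step, fix $k > 0$ and assume $\tau$ is defined as the $k$-deputy of $\sigma$, either by a $D(k+1,\sigma',\tau')$-module (when $k < n-1$) or by a $D(\rho)$-module (when $k = n-1$). The requirement $D(k,\sigma,\tau)$ asks that $\{\nu : (\exists \mu \supset \sigma)(D(k-1,\nu,\mu) \text{ is satisfied})\}$ be dense below $\tau$. Since $\tau$ is already a deputy of $\sigma$ by hypothesis, the $D(k,\sigma,\tau)$-module genuinely acts at each of its infinitely many assigned stages: it selects the least candidate $\bar{\nu} \supseteq \tau$ (a string whose cone contains no currently-defined $(\tau,k-1)$-deputy) and defines $d(\nu) = \mu \supset \sigma$ for $\nu = \bar{\nu}\<1^l0\>$ with fresh $l$. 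The inductive hypothesis applied to the newly minted $(k-1)$-deputy pair $(\nu, \mu)$ immediately yields $D(k-1,\nu,\mu)$, so the whole inductive step reduces to proving that the set of $\nu$ for which $d(\nu)$ is ever defined is dense below $\tau$.

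The main obstacle is this density claim, which I would prove by contradiction using that the module's selection is via a fixed computable well-ordering of $2^{<\omega}$ of order type $\omega$ (e.g., length-lexicographic). Suppose some $\xi \supset \tau$ has no descendant ever acquiring a $(\tau,k-1)$-deputy; then $\xi$ remains a candidate at every $D(k,\sigma,\tau)$-stage. Enumerate the strings $\bar{\nu}_1, \bar{\nu}_2, \ldots$ chosen at successive such stages. Once $d(\bar{\nu}_i\<1^{l_i}0\>)$ is defined, $\bar{\nu}_i$ acquires a descendant holding a deputy and is therefore never a candidate again, so the $\bar{\nu}_i$ are pairwise distinct. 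Since $\xi$ is a candidate throughout and the module always picks the minimum, each $\bar{\nu}_i \leq \xi$ in the well-ordering. But only finitely many strings precede or equal $\xi$ in a type-$\omega$ well-ordering, so $\bar{\nu}_i = \xi$ for some $i$; the module then defines a deputy at $\xi\<1^{l_i}0\>$, contradicting the assumed badness of $\xi$. This establishes density and completes the inductive step.
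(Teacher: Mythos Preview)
Your proof is correct and follows essentially the same approach as the paper: induction on $k$, with the base case supplied by Lemma~\ref{lem:d-Gamma} and the inductive step obtained by combining the density of the $(\tau,k-1)$-deputies below $\tau$ with the induction hypothesis. The only difference is that the paper simply asserts this density as a consequence of the $D(k,\sigma,\tau)$-module, whereas you spell out the routine argument (the module always selects the least eligible $\bar{\nu}$, each selection permanently disqualifies that $\bar{\nu}$, and only finitely many strings precede any given $\xi$); this is extra detail, not a different route.
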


\begin{proof}
We prove the lemma by induction on $k$.

By Lemma \ref{lem:d-Gamma}, the lemma holds for $k = 0$.

Suppose that $k > 0$ and $\tau$ is defined as the $k$-deputy of $\sigma$. By the $D(k,\sigma,\tau)$-module, the set below is dense below $\tau$
$$
    \{\nu: (\exists \mu \supset \sigma)(\mu \text{ is the $(k-1)$-deputy of } \nu)\}.
$$
By the induction hypothesis, the following set is also dense below $\tau$
$$
    \{\nu: (\exists \mu \supset \sigma)(D(k-1,\nu,\mu) \text{ is satisfied})\}.
$$
So $D(k,\sigma,\tau)$ is satisfied.
\end{proof}

\begin{lemma}\label{lem:D-satisfaction-2}
Every $D(\rho)$ is satisfied. Hence $G$ is not $\Pi^0_n$ in $\Gamma(G)$ for every $n$-generic $G$.
\end{lemma}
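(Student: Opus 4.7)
My plan is to obtain satisfaction of each $D(\rho)$ directly from the construction, and then to deduce that $G$ is not $\Pi^0_n$ in $\Gamma(G)$ by combining $D(\rho)$ satisfaction with Lemma \ref{lem:good-deputy} to verify the implication \eqref{eq:A2}; the contradiction is then exactly the one sketched in the discussion of (A2).

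For the first assertion I would inspect the $D(\rho)$-module in subsection \ref{ss:construction}. Countably many stages are assigned to $D(\rho)$, and each such stage takes the $\subseteq$-least $\bar{\sigma} \supseteq \rho\<0\>$ having no $(\rho, n-1)$-deputy defined below it and defines a $(\rho, n-1)$-deputy $d(\sigma) \supset \rho\<1\>$ for some $\sigma \supset \bar{\sigma}$. A routine induction on $D(\rho)$-stages shows that every $\bar{\sigma} \supseteq \rho\<0\>$ is eventually extended by some $\sigma$ for which such a deputy is defined; Lemma \ref{lem:D-satisfaction-1} then gives $D(n-1, \sigma, d(\sigma))$, so the set inside $D(\rho)$ is dense below $\rho\<0\>$.

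For the second assertion, let $G$ be $n$-generic and suppose for contradiction that $\omega - G = \{x : \theta(\Gamma(G), x)\}$ for some $\Sigma^0_n$ formula $\theta$. The premise of \eqref{eq:A2} follows directly from $n$-genericity applied to the $\Sigma^0_n$ forcing relation: for each $x \notin G$, $\theta(\Gamma(G), x)$ holds and must be forced by some $\sigma \subset G$, which we may take $\supset G\uh x$. The main work is to verify the conclusion of \eqref{eq:A2}, that $\{\tau : (\exists x)(\tau(x) = 1 \wedge \tau \Vdash \theta(\Gamma(G), x))\}$ is dense along $G$. Given $\mu \subset G$, I would pick $x \notin G$ with $x > |\mu|$, set $\rho = G\uh x \supseteq \mu$, and use the premise to obtain $\sigma \supset \rho$ forcing $\theta(\Gamma(G), x)$. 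If $\sigma$ extends $\rho\<1\>$ then $\sigma$ already witnesses density. Otherwise $\sigma \supseteq \rho\<0\>$; writing $\theta$ as $(\exists y)\varphi$ with $\varphi \in \Pi^0_{n-1}$, some $\sigma' \supseteq \sigma$ and some $y$ satisfy $\sigma' \Vdash \varphi(\Gamma(G), x, y)$. By the first assertion I extend $\sigma'$ to $\sigma''$ possessing a $(\rho, n-1)$-deputy $\tau'' \supset \rho\<1\>$ with $D(n-1, \sigma'', \tau'')$ satisfied, and Lemma \ref{lem:good-deputy} applied to $\varphi$ yields $\tau'' \Vdash \varphi(\Gamma(G), x, y)$, hence $\tau'' \Vdash \theta(\Gamma(G), x)$. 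Since $\tau''(x) = 1$ and $\tau'' \supseteq \mu$, $\tau''$ witnesses density above $\mu$. Finally, $n$-genericity puts some $\tau \subset G$ in this dense set, yielding an $x$ with $x \in G$ (as $\tau(x) = 1$) and $\theta(\Gamma(G), x)$ (as $\tau$ forces $\theta$), i.e., $x \notin G$, a contradiction.

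The only delicate step is the transfer across the $\rho\<0\>$ and $\rho\<1\>$ sides when verifying the conclusion of \eqref{eq:A2}: the $\Sigma^0_n$ forcing witness may lie on the $\rho\<0\>$ side, and to move it across one first peels off the outer existential quantifier to expose a $\Pi^0_{n-1}$ matrix on which the deputy machinery of Lemma \ref{lem:good-deputy} bites. Everything else is immediate from the construction.
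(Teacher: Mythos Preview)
Your proposal is correct and follows essentially the same approach as the paper: derive satisfaction of $D(\rho)$ from the construction via Lemma~\ref{lem:D-satisfaction-1}, then use Lemma~\ref{lem:good-deputy} to transfer a forced $\Pi^0_{n-1}$ matrix from the $\rho\<0\>$ side to a deputy on the $\rho\<1\>$ side, showing the relevant $\Sigma^0_n$ set is dense along $G$ and obtaining the contradiction by $n$-genericity. The only cosmetic difference is that the paper takes its forcing witness $\sigma_x$ to be an initial segment of $G$ (so your case ``$\sigma$ extends $\rho\<1\>$'' is in fact vacuous, since you obtained $\sigma \subset G$ and $G(x)=0$), whereas you then extend off $G$ to reach a $\sigma''$ with a deputy; both variants work.
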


\begin{proof}
By the $D(\rho)$-module, the set below is dense below $\rho\<0\>$
$$
    \{\sigma: (\exists \tau \supset \rho\<1\>)(\tau \text{ is the $(n-1)$-deputy of } \sigma)\}.
$$
By Lemma \ref{lem:D-satisfaction-1}, the following set is also dense below $\rho\<0\>$
$$
    \{\sigma: (\exists \tau \supset \rho\<1\>)(D(n-1,\sigma,\tau) \text{ is the satisfied})\}.
$$
Hence $D(\rho)$ is satisfied.

Suppose that $G$ is $n$-generic. Then $\Gamma(G)$ is total by Lemma \ref{lem:Gamma}. Let $X = \Gamma(G)$. Assume that $\omega - G$ equals
$$
    S = \{x: (\exists y) \varphi(x,y)\}
$$
for some $\varphi$ which is $\Pi^0_{n-1}$ in $X$. For each $x \not\in G$ let $\rho_x = G \uh x$. By the $n$-genericity of $G$ and the satisfaction of $D(\rho_x)$, there exists an initial segment $\sigma_x$ of $G$ such that $\rho_x \subset \sigma_x$, $\sigma_x \Vdash \varphi(x,y)$ for some $y$ and $D(n-1,\sigma_x,\tau_x)$ is satisfied for some $\tau_x \supset \rho\<1\>$. By Lemma \ref{lem:good-deputy}, $\tau_x \Vdash \varphi(x,y)$. So the following $\Sigma^0_n$ set is dense along $G$
$$
    D = \{\tau: (\exists x < |\tau|)(\tau(x) = 1 \wedge \tau \Vdash (\exists y) \varphi(x,y))\}.
$$
As $G$ is $n$-generic, it has an initial segment $\sigma$ in $D$. So $\omega - G \neq S$.
\end{proof}

So we prove Theorem \ref{thm:Main}. Moreover, by the last two lemmata and the remark preceding Lemma \ref{lem:D-satisfaction-1}, every $n$-generic $G$ is properly $\Sigma^0_k$ in $\Delta_{n-k}(\Gamma(G))$ for $n \geq k > 1$.

\section{Remarks}\label{s:Remarks}

Recall that for positive $n$ a set $X$ is \emph{generalized ${\rm low}_n$} ($GL_n$ for short) if
$$
    X^{(n)} \equiv_T (X \oplus \emptyset')^{(n-1)}.
$$
Every $n$-generic $G$ satisfies $G^{(n)} \equiv_T G \oplus \emptyset^{(n)}$ and thus is $GL_n$ (\cite[Lemma 2.6]{Jockusch:1980.generic}). Jockusch proved that every $2$-generic bounds some $X$ which is $GL_2$ but not $GL_1$ as a corollary of the relative recursive enumerability of $1$-generics (\cite[Corollary 5.5]{Jockusch:1980.generic}). Similarly he also proved that every $3$-generic bounds a set in $GL_3 - GL_2$ (\cite[Corollary 5.11]{Jockusch:1980.generic}). Then Jockusch conjectured that every $(n+1)$-generic bounds a set in $GL_{n+1} - GL_{n}$ for all positive $n$ (\cite[Conjecture 5.14]{Jockusch:1980.generic}). Below we confirm this conjecture.

Firstly, by relativizing the corresponding proof we have an analog of Lemma \ref{lem:good-deputy}.

\begin{lemma}\label{lem:good-deputy-rel}
Let $Y$ be fixed. If $D(n-1,\sigma,\tau)$ is satisfied and $\sigma \Vdash \varphi$ for some $\varphi$ which is $\Pi^0_{n-1}$ in $\Gamma(G) \oplus Y$ then $\tau \Vdash \varphi$ as well.
\end{lemma}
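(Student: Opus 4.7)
The proof mirrors that of Lemma \ref{lem:good-deputy} essentially verbatim, because the set $Y$ is fixed throughout and therefore plays the role of a constant oracle parameter in the forcing relation. The plan is to induct on $k$ while proving the slightly more general statement: \emph{if $D(k,\sigma,\tau)$ is satisfied and $\sigma \Vdash \varphi$ for some $\varphi$ which is $\Pi^0_k$ in $\Gamma(G) \oplus Y$, then $\tau \Vdash \varphi$}. Specializing to $k = n-1$ yields the lemma.

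For the base case $k = 0$, the formula $\varphi$ is a recursive predicate in $\Gamma(G) \oplus Y$, so whether $\sigma$ forces $\varphi$ is decided by a finite initial segment of $\Gamma(G)$ together with the (already determined) values of $Y$. Since $D(0,\sigma,\tau)$ means $\Gamma(\sigma) \subseteq \Gamma(\tau)$ (equivalently, by Lemma \ref{lem:d-Gamma}, $\Gamma(\tau)$ extends any initial segment of $\Gamma(G)$ witnessing the forcing at $\sigma$), the same witness shows $\tau \Vdash \varphi$. The fixed oracle $Y$ does not enter the argument in any nontrivial way.

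For the inductive step $k > 0$, I would write $\varphi = (\forall y)\psi$ with $\psi$ being $\Sigma^0_{k-1}$ in $\Gamma(G) \oplus Y$, and suppose towards a contradiction that $\tau \not\Vdash \varphi$. Then there exist some $j$ and some $\bar{\nu} \supset \tau$ with $\bar{\nu} \Vdash \neg\psi(\Gamma(G)\oplus Y, j)$. Since $D(k,\sigma,\tau)$ is satisfied, the set of $\nu$ possessing a $(k-1)$-deputy $\mu \supset \sigma$ is dense below $\tau$, so I can pick $\nu \supseteq \bar{\nu}$ with such a $\mu$; then $\nu \Vdash \neg\psi$ as well. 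By Lemma \ref{lem:D-satisfaction-1}, $D(k-1,\nu,\mu)$ is satisfied, so the induction hypothesis applied to the $\Pi^0_{k-1}$ (in $\Gamma(G) \oplus Y$) formula $\neg\psi(\cdot,j)$ gives $\mu \Vdash \neg\psi$. Since $\mu \supset \sigma$, this contradicts $\sigma \Vdash \varphi$.

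There is no real obstacle here: the whole point is that the deputy machinery is robust under attaching a fixed oracle $Y$, because the recursive density of the sets of deputies and the monotonicity $\Gamma(\mu) \supseteq \Gamma(\nu)$ are properties of the construction and do not depend on the complexity or oracle content of the formula being transferred. If anything requires care, it is the verification that forcing of a $\Pi^0_k$-in-$(\Gamma(G)\oplus Y)$ formula still decomposes as $(\forall y)$ over a $\Sigma^0_{k-1}$-in-$(\Gamma(G)\oplus Y)$ formula in the same way Cohen forcing decomposes $\Pi^0_k$-in-$\Gamma(G)$ formulas, but this is standard and uniform in the oracle.
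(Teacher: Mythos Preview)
Your proposal is correct and follows exactly the approach the paper indicates: the paper simply states that the lemma is obtained by relativizing the proof of Lemma~\ref{lem:good-deputy}, and your write-up carries out that relativization in detail. The only cosmetic point is that once you assume $D(k,\sigma,\tau)$ is satisfied you can directly pick $\nu \supseteq \bar{\nu}$ and $\mu \supset \sigma$ with $D(k-1,\nu,\mu)$ satisfied, so the appeal to Lemma~\ref{lem:D-satisfaction-1} is not strictly needed.
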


\begin{corollary}\label{cor:GL}
For $n > 0$, every $(n+1)$-generic bounds a set in $GL_{n+1} - GL_{n}$.
\end{corollary}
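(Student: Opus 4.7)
The plan is to apply Theorem~\ref{thm:Main} with parameter $n$ to the given $(n+1)$-generic $G$, obtain the partial recursive functional $\Gamma$, and show that the single set $X = \Gamma(G)$ already witnesses $GL_{n+1} - GL_n$. Since any $(n+1)$-generic is in particular $n$-generic, Theorem~\ref{thm:Main} delivers $X \leq_T G$ together with $G$ being properly $\Sigma^0_n$ in $X$; this is the starting point for both halves of the claim.

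The key step will be to upgrade the properness clause: we aim to show that $G$ is not merely non-$\Pi^0_n$ in $X$ but actually non-$\Pi^0_n$ in $X \oplus \emptyset'$. To see this, we replay the argument proving Lemma~\ref{lem:D-satisfaction-2} with a putative definition $\omega - G = \{x : (\exists y) \varphi(x,y)\}$ in which $\varphi$ is now $\Pi^0_{n-1}$ in $X \oplus \emptyset'$ rather than in $X$. The corresponding dense set $D = \{\tau : (\exists x < |\tau|)(\tau(x) = 1 \wedge \tau \Vdash (\exists y) \varphi(x,y))\}$ then sits at level $\Sigma^0_n(\emptyset') = \Sigma^0_{n+1}$, and is still decided by initial segments of $G$ precisely because $G$ is $(n+1)$-generic. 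The promotion of forcing from $\sigma_x$ to its $(n-1)$-deputy $\tau_x$ carries over verbatim because Lemma~\ref{lem:good-deputy-rel} (with $Y = \emptyset'$) is designed exactly to handle $\Pi^0_{n-1}$-in-$(\Gamma(G) \oplus \emptyset')$ formulas. Since $G \in \Sigma^0_n(X) \subseteq \Sigma^0_n(X \oplus \emptyset')$ is already in hand, we will have that $G$ is properly $\Sigma^0_n$ in $X \oplus \emptyset'$; equivalently,
\begin{gather*}
  G \leq_T (X \oplus \emptyset')^{(n)} \quad\text{and}\quad G \not\leq_T (X \oplus \emptyset')^{(n-1)}.
\end{gather*}

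From these two inequalities the two $GL$ properties fall out routinely. For $X \in GL_{n+1}$: since $X \leq_T G$, we have $X^{(n+1)} \leq_T G^{(n+1)} \equiv_T G \oplus \emptyset^{(n+1)}$ by \cite[Lemma 2.6]{Jockusch:1980.generic}, and $G \oplus \emptyset^{(n+1)}$ is computable from $(X \oplus \emptyset')^{(n)}$ using $G \leq_T (X \oplus \emptyset')^{(n)}$ together with the trivial $\emptyset^{(n+1)} \leq_T (X \oplus \emptyset')^{(n)}$. For $X \notin GL_n$: $G \leq_T X^{(n)}$ because $G \in \Sigma^0_n(X)$, and combined with $G \not\leq_T (X \oplus \emptyset')^{(n-1)}$ this forces $X^{(n)} \not\leq_T (X \oplus \emptyset')^{(n-1)}$, as required.

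The one genuine obstacle is the relativized non-$\Pi^0_n$-ness argument above; the rest is bookkeeping around the identifications $G^{(m)} \equiv_T G \oplus \emptyset^{(m)}$. The detail worth checking is that for $\theta$ a $\Sigma^0_n$-in-$(\Gamma(G) \oplus \emptyset')$ formula, the forcing relation $\sigma \Vdash \theta(x)$ is $\Sigma^0_n(\emptyset') = \Sigma^0_{n+1}$ as a predicate of $\sigma$ and $x$. This is standard for Cohen forcing: because $\Gamma$ is partial recursive (unrelativized), the $\Gamma(G)$ parameter is absorbed into the forced string, and $\emptyset'$ contributes a single extra jump relative to the unrelativized forcing predicate used throughout \S\ref{s:Main-theorem}.
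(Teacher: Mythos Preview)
Your proposal is correct and follows essentially the same route as the paper's proof: both apply the functional $\Gamma$ from \S\ref{s:Main-theorem} with parameter $n$, invoke Lemma~\ref{lem:good-deputy-rel} with $Y = \emptyset'$ to rerun the density argument of Lemma~\ref{lem:D-satisfaction-2} at one jump higher, use the $(n+1)$-genericity of $G$ to meet the resulting $\Sigma^0_{n+1}$ set, and then read off $X \in GL_{n+1} - GL_n$ from $G \leq_T X^{(n)}$, $G \not\leq_T (X \oplus \emptyset')^{(n-1)}$, and $G^{(n+1)} \equiv_T G \oplus \emptyset^{(n+1)}$. Your extra remark on the complexity of the forcing relation is the only elaboration beyond the paper, and it is accurate.
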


\begin{proof}
Let $\Gamma$ be the functional constructed in \S \ref{s:Main-theorem} for $n$ and let $G$ be an arbitrary $(n+1)$-generic. Then $\Gamma(G)$ is total and $G$ is $\Sigma^0_n$ in $\Gamma(G)$.

Let $X = \Gamma(G)$. We claim that $G$ is not $\Pi^0_n$ in $X \oplus \emptyset'$. For a contradiction, assume that $\varphi$ is $\Pi^0_{n-1}$ in $X \oplus \emptyset'$ and the complement of $G$ equals the following set
$$
    S = \{x: (\exists y) \varphi(x,y)\}.
$$
For each $x \not\in G$, let $\rho_x = G \uh x$ and let $\sigma_x$ be an extension of $\rho_x \<0\>$ forcing $\varphi(\Gamma(G),x,y)$ for some $y$. As $\varphi$ is $\Pi^0_{n-1}$ in $X \oplus \emptyset'$, $\varphi$ is $\Pi^0_n$ in $G$ for $n > 1$ or $\Delta^0_2$ in $G$ for $n = 1$. By the $(n+1)$-genericity of $G$, $\sigma_x$ exists. By the construction in \S \ref{s:Main-theorem}, we may assume that $D(n-1,\sigma_x,\tau_x)$ is satisfied for some $\tau_x \supset \rho_x \<1\>$. By Lemma \ref{lem:good-deputy-rel}, $\tau_x \Vdash \varphi(x,y)$ too. So the set below is dense along $G$
$$
    T = \{\tau: (\exists x < |\tau|)(\tau(x) = 1 \wedge \tau \Vdash (\exists y) \varphi(x,y))\}.
$$
Note that $T$ is $\Sigma^0_{n+1}$. By the $(n+1)$-genericity of $G$, there exists a finite initial segment of $G$ in $T$. Thus $G \cap S \neq \emptyset$, giving us the desired contradiction.

So $G \leq_T X^{(n)}$ but $G \not\leq_T (X \oplus \emptyset')^{(n-1)}$. Hence $X \not\in GL_n$. On the other hand,
$$
    X^{(n+1)} \leq_T G^{(n+1)} \equiv_T G \oplus \emptyset^{(n+1)} \leq_T (X \oplus \emptyset')^{(n)} \leq_T X^{(n+1)}.
$$
Thus $X \in GL_{n+1}$.
\end{proof}

The above proof is an application of deputies. As another application, we obtain the following corollary strengthening Theorem \ref{thm:Main} and also \cite[Theorem 3]{Anderson:2011} of Anderson. Let us recall some notions before stating the corollary. Let $(D_i: i \in \omega)$ be a fixed enumeration of all finite sets such that $x \in D_i$ and $|D_i|$ are uniformly recursive in both $x$ and $i$. A \emph{strong array} is a pairwise disjoint subsequence of $(D_i: i \in \omega)$. A set $X$ is \emph{hyperimmune} in another set $Y$ if for any $Y$-recursive strong array $(D_{k_i}: i \in \omega)$ there exists $i$ such that $X \cap D_{k_i} = \emptyset$.

\begin{corollary}\label{cor:generic-himmune}
If $n > 0$ and $G$ is $n$-generic then there exists $X \leq_T G$ such that $G$ is $\Sigma^0_n$ in $X$ and the complement of $G$ is hyperimmune in $X^{(n-1)}$.
\end{corollary}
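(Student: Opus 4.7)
My plan is to take $X = \Gamma(G)$ as constructed in Theorem \ref{thm:Main}, which already supplies the conditions $X \leq_T G$ and ``$G$ is $\Sigma^0_n$ in $X$''; only the hyperimmunity of $\omega - G$ in $X^{(n-1)}$ needs verification. I would argue by contradiction: suppose some functional $\Phi$ makes $f = \Phi^{X^{(n-1)}}$ total with $(D_{f(i)})_{i \in \omega}$ pairwise disjoint and $D_{f(i)} \cap (\omega - G) \neq \emptyset$ for every $i$. Because $f$ is $\Delta^0_n$ in $\Gamma(G)$, its graph can be written as $(\exists y)\,\psi(\Gamma(G), i, k, y)$ for some $\psi$ that is $\Pi^0_{n-1}$ in $\Gamma(G)$. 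I then introduce the $\Sigma^0_n$ set of strings
\[
T = \{\sigma : (\exists i, k, y)\, [\sigma \Vdash \psi(\Gamma(G), i, k, y) \wedge D_k \subseteq \{x < |\sigma| : \sigma(x) = 1\}]\}.
\]
Density of $T$ along $G$ together with $n$-genericity would then yield $\sigma \subset G$ in $T$, hence $i$ with $f(i) = k$ and $D_k \subseteq G$, contradicting the assumption.

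The core of the argument is thus to prove that $T$ is dense along $G$. Fix $\rho \subset G$. Since $\omega - G$ is infinite for a $1$-generic $G$, I first extend $\rho$ along $G$ until $|\rho| \notin G$, so that $\rho\<0\> \subset G$. By pairwise disjointness of the array, only finitely many $i$ have $D_{f(i)} \cap [0, |\rho|) \neq \emptyset$, so I can choose $i^*$ with $D_{f(i^*)} \cap [0, |\rho|) = \emptyset$. Since $(\exists y)\,\psi(\Gamma(G), i^*, f(i^*), y)$ is a true $\Sigma^0_n$ in $G$ statement, $n$-genericity furnishes $\sigma^* \subset G$ extending $\rho\<0\>$ together with some $y^*$ such that $\sigma^* \Vdash \psi(\Gamma(G), i^*, f(i^*), y^*)$. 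By Lemma \ref{lem:D-satisfaction-2} the set of $\sigma \supseteq \rho\<0\>$ with a $(\rho, n-1)$-deputy $d(\sigma) \supset \rho\<1\>$ is dense below $\rho\<0\>$, and by item (4) of \S\ref{ss:construction} any such deputy has the form $d(\sigma) = \rho\<1^{|\sigma|} 0\>$ with $|\sigma|$ controlled by a fresh parameter. Extending $\sigma^*$ to a long enough intermediate string first and then picking a deputy-domain extension, I obtain $\sigma^{**} \supseteq \sigma^*$ in the deputy domain with $|\sigma^{**}| > \max D_{f(i^*)} - |\rho|$. Monotonicity of forcing preserves $\sigma^{**} \Vdash \psi(\Gamma(G), i^*, f(i^*), y^*)$, and Lemma \ref{lem:good-deputy} applied with $k = 1$ to the $\Pi^0_{n-1}$ formula $\psi$ transfers this forcing to $\tau^{**} = d(\sigma^{**})$. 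Since $\tau^{**}$ is identically $1$ on the interval $[|\rho|, |\rho| + |\sigma^{**}|)$ and $D_{f(i^*)}$ lies inside this interval by the choice of $i^*$ and of $|\sigma^{**}|$, I conclude that $D_{f(i^*)} \subseteq \{x < |\tau^{**}| : \tau^{**}(x) = 1\}$, placing $\tau^{**} \supset \rho$ in $T$ as required.

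The main obstacle, I expect, is recognizing that ``$\Phi^{X^{(n-1)}}(i) = k$''—although it superficially involves the $(n-1)$-st jump of $X$—does admit the decomposition $(\exists y)\,\psi$ with $\psi$ being $\Pi^0_{n-1}$ in $\Gamma(G)$ itself, simply by the $\Sigma^0_n$ normal form of a $\Delta^0_n$ in $\Gamma(G)$ relation, so that the unrelativized Lemma \ref{lem:good-deputy} applies directly and one need not inquire whether $\Gamma(G) \in GL_{n-1}$. The second delicate point is orchestrating $|\sigma^{**}|$ through the freshness of $l$ in the $D(\rho)$-module, so that the block of $1$'s in the deputy stretches just past $\max D_{f(i^*)}$ without the trailing $0$ of the deputy falling inside $D_{f(i^*)}$; this is precisely where the missing witness in $D_{f(i^*)} \cap (\omega - G)$ gets ``flipped'' to $1$ in $\tau^{**}$ while the forcing relation survives the deputy transfer.
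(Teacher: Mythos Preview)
Your proposal is correct and follows essentially the same approach as the paper's own proof: both take $X = \Gamma(G)$, assume an $X^{(n-1)}$-recursive strong array meeting $\omega - G$, write the graph of the array function as $(\exists y)\,\psi$ with $\psi$ a $\Pi^0_{n-1}$ predicate of $\Gamma(G)$, and then use the $(\rho,n-1)$-deputy mechanism together with Lemma~\ref{lem:good-deputy} to exhibit a $\Sigma^0_n$ set $T$ dense along $G$ whose elements force some $D_{f(i)} \subset G$. The only cosmetic difference is the location of the pivot for the deputy---you fix a $0$-position of $G$ before selecting $i^*$ and then ensure $D_{f(i^*)}$ sits entirely in the $1$-block of the deputy, whereas the paper first selects $i_m$ and then pivots at the least $0$-position $l_m$ inside $D_{k_m}$---but the arguments are otherwise identical.
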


\begin{proof}
Fix $n > 0$. Let $\Gamma$ be the functional constructed in \S \ref{s:Main-theorem}. Then $X = \Gamma(G)$ is recursive in $G$ and $G$ is $\Sigma^0_n$ in $X$.

For a contradiction, assume that $\Phi$ is a Turing machine such that $f = \Phi(X^{(n-1)})$ is total and $(D_{f(i)}: i \in \omega)$ is a strong array witnessing the complement of $G$ being non-hyperimmune in $X^{(n-1)}$. Let $\varphi(i,x)$ be a formula asserting that $f(i) = x$ which is $\Sigma^0_n$ in $X$. Write $\varphi(i,x)$ as $(\exists y) \psi(i,x,y)$ where $\psi$ is $\Pi^0_{n-1}$ in $\Gamma(G)$. For each $m$, let $\rho_m = G \uh m$ and let $i_m$ and $k_m$ be such that $f(i_m) = k_m$ and $m < \min D_{k_m}$. By the $n$-genericity of $G$, there exist $\sigma_m \subset G$ and $y_m$ such that $\max D_{k_m} < |\sigma_m|$ and $\sigma_m$ forces $\psi(i_m,k_m,y_m)$.
Let $l_m$ be the least $l$ such that $l \in D_{k_m}$ and $\sigma_m(l) = 0$. As $m < \min D_{k_m}$, $l_m > m$ and $\rho_m \subset \sigma_m \uh l_m$. By the construction in \S \ref{s:Main-theorem}, we may assume the existence of some $\tau_m \supset (\sigma_m \uh l_m) \<1\>$ such that $D(n-1,\sigma_m,\tau_m)$ is satisfied and $\tau_m(x) = 1$ for all $x \in D_{k_m}$. By Lemma \ref{lem:good-deputy}, $\tau_m \Vdash \psi(i_m,k_m,y_m)$ and thus $\tau_m \Vdash \varphi(i_m,k_m)$. So the following $\Sigma^0_n$ set is dense along $G$
$$
    T = \{\tau: (\exists i,k) \tau \Vdash (D_k \subset G \wedge \Phi(\Gamma(G); i) \downarrow = k)\}.
$$
By the $n$-genericity of $G$, $G$ meets $T$, i.e., there exists an initial segment of $G$ in $T$. So $D_{f(i)} \subset G$ for some $i$ and we have the desired contradiction.
\end{proof}

Kumabe \cite{Kumabe:1991} obtained a strengthening of the relative recursive enumerability of $1$-generics that for every positive $n$ and every $n$-generic $G$ there exists a $G$-recursive $n$-generic $H$ such that $G$ is recursively enumerable but not recursive in $H$. So it is natural to ask whether Theorem \ref{s:Main-theorem} allows such a stronger version. More precisely,

\begin{question}
For positive $n$, is every $n$-generic $G$ properly $\Sigma^0_n$ in another $n$-generic $H$ which is recursive in $G$?
\end{question}

As mentioned, the r.e.a.\ degrees form a large class. Let ${\bf R}_1$ denote this class, and in general let ${\bf R}_n$ denote the class whose definition is obtained from replacing recursive enumerability by being $\Sigma^0_n$ in the definition of $R$. Kurtz \cite{Kurtz:81.thesis} (see also \cite[Theorem 8.21.8]{Downey.Hirschfeldt:2010.book}) proved that ${\bf R}_1$ includes the degrees of $2$-random reals. So we may ask whether a parallel property holds for $n$-randoms with $n > 2$.

\begin{question}
For $n > 2$, is every $n$-random $X$ properly $\Sigma^0_{n-1}$ in some $X$-recursive $Y$? Does ${\bf R}_{n-1}$ include the $n$-random degrees?
\end{question}

\bibliographystyle{plain}

\end{document}